\documentclass[10pt]{amsart}
\usepackage[utf8x]{inputenc}
\usepackage{dsfont}
\usepackage{amsmath}
\usepackage{amssymb}
\usepackage{graphicx}
\usepackage{amssymb}
\usepackage{amsfonts}
\usepackage{soul}
\usepackage{mathpazo}
\usepackage{newunicodechar}
\newunicodechar{：}{:}
\usepackage{color}
\usepackage{yfonts}
\usepackage{paralist}
\usepackage{stmaryrd}
\usepackage{amsxtra}
\usepackage{latexsym,mathrsfs}
\usepackage{verbatim}
\usepackage{mathabx}

\usepackage{epsfig, enumerate}
\usepackage{color}
\usepackage{hyperref}

\newtheorem{theorem}{Theorem}
\newtheorem*{lemma*}{Claim}
\newtheorem{lemma}[theorem]{Lemma}
\newtheorem{coro}[theorem]{Corollary}

\newtheorem{prop}[theorem]{Proposition}

 \theoremstyle{definition}\newtheorem{definition}[theorem]{Definition}

\numberwithin{theorem}{section}
\numberwithin{equation}{section}
\definecolor{turquoise}{cmyk}{0.65,0,0.1,0.1}
\definecolor{purple}{rgb}{0.65,0,0.65}
\definecolor{green}{rgb}{0, 0.5, 0}
\definecolor{blue}{rgb}{0, 0, 1}
\definecolor{orange}{rgb}{0.8, 0.6, 0.2}
\definecolor{red}{rgb}{0.8, 0.2, 0.2}
\definecolor{brown}{rgb}{0.5, 0.16, 0.16}

\title[MC Component]{Large Monochromatic Components in Colored Random Graphs}

\author{Xiao-Chuan Liu}
\address[Liu]{Departamento de Matemática,
 Universidade Federal de Pernambuco,
	Avenida Jornalista Aníbal Fernandes, Cidade Universitária, Recife, Brazil}
\email{xiaochuan.liu@ufpe.br}

\author{Xu Yang}
\address[Yang]{Instituto de Computação,  Universidade Federal de Alagoas,
	Av. Lourival Melo Mota, S/N, Maceió, Brazil}
\email{yang@ic.ufal.br}

\begin{document}
\maketitle

\begin{abstract}
We study the size of the largest monochromatic connected component that must appear in any edge-coloring of a random graph.

Let $G\sim G(n,p)$ with $p\gg 1/n$ and $p=o(1)$, and write $np=he^h$. We show that, with high probability, every $2$-edge-coloring of $G$ contains a monochromatic connected component of order at least $n-\Theta(ne^{-h})$. Moreover, we construct colorings showing that this bound is best possible up to constant factors.

We extend this result to three colors: for $p\gg 1/n$ and $p=o(1)$, with high probability every $3$-edge-coloring of $G$ contains a monochromatic connected component of size at least $\frac{n}{2}-\Theta(1/p)$, and this estimate is again tight up to constant factors.

In the bipartite setting $G\sim G(n,n,p)$, under the same assumptions on $p$, we prove an analogous statement: with high probability, every $2$-edge-coloring contains two monochromatic components whose union covers all but $\Theta(ne^{-h})$ vertices, and this bound is asymptotically sharp.

Our approach is elementary and is based on analyzing large connected structures across suitably balanced vertex partitions.
\end{abstract}

\section{Introduction}
Understanding the structures that must appear in edge-colorings of graphs is a central theme in combinatorics, with deep connections to Ramsey theory, extremal graph theory, and probabilistic methods. A classical line of research investigates how large a monochromatic structure must exist under arbitrary edge-colorings, with particular emphasis on connected subgraphs, which capture global organization in graphs.
In deterministic settings, the study of large monochromatic connected components has a long history. A fundamental problem, posed and systematically studied by Gy\'arf\'as ~\cite{Gyarfas77,Gyarfas87}, asks for the order of the largest monochromatic connected component that must appear in every $r$-edge-coloring of the complete graph $K_n$. This problem has become a cornerstone in the study of monochromatic structures in edge-colored graphs, inspiring a broad line of research on unavoidable configurations and playing an important role in the development of size-Ramsey theory. Formally, for a graph $G$, let $m_r(G)$ denote the minimum possible order of the largest monochromatic connected component over all $r$-edge-colorings of $G$. 

Gy\'arf\'as \cite{Gyarfas77} showed that for two colorings the largest monochromatic component must span the entire graph, that is, \(m_2(K_n)=n\), while for three colorings it must have order at least about half of the vertices, \(m_3(K_n)=\lceil n/2\rceil\). The latter bound is tight, as witnessed by a simple extremal construction obtained by partitioning the vertex set into four nearly equal parts. Partition the vertex set of \(K_n\) into four parts \(A_1,A_2,B_1,B_2\) whose sizes differ by at most one. Let \(A=A_1\cup A_2\) and \(B=B_1\cup B_2\). Color all edges inside \(A\) and inside \(B\) green, all edges between \(A_1\) and \(B_1\) and between \(A_2\) and \(B_2\) blue, and all edges between \(A_1\) and \(B_2\) and between \(A_2\) and \(B_1\) red. In this coloring, every monochromatic connected component has order at most \(\lceil n/2\rceil\), and this bound is attained. 

More generally, results of Gyárfás~\cite{Gyarfas87} and Füredi~\cite{Furedi1981} imply  that in every \(r\)-edge-coloring of the complete graph \(K_n\), 
there exists a monochromatic connected component of order at least 
$\Big\lceil \frac{n}{r-1} \Big\rceil$. 
This lower bound was later shown to follow directly from a simple counting argument 
(see Theorem~11 of~\cite{LiuMorrisPrince2009}).  
Motivated by this, the following exact formula for \(m_r(K_n)\) 
was conjectured (see, e.g.,~\cite{Gyarfas87,GyarfasFurediSimonyi93}):
\begin{equation}
m_r(K_n)=\Big\lceil \frac{n}{r-1}\Big\rceil.
\end{equation}
The conjecture is known to hold whenever $r-1$ is a prime power. In these cases, the extremal colorings can be obtained from affine-plane constructions of order  \(r-1\).

In contrast, the random graph model provides a natural framework for studying how randomness influences such unavoidable structures. Random graphs have long served as a testing ground for understanding threshold phenomena and the typical behavior of extremal parameters.
Inspired by their deterministic counterparts, the study of monochromatic structures in random graphs has attracted considerable attention in recent years. A foundational result in this direction, proved by Bal and DeBiasio~\cite{BalDeBiasio17} and independently by Dudek and Prałat~\cite{DudekPralat2017},
shows that whenever $np\to\infty$, every $r$-edge-coloring of a random graph typically contains a monochromatic connected component of order close to $n$ divided by $(r-1)$.

\begin{theorem}[Theorem~1.8 of~\cite{BalDeBiasio17}, Theorem~5.3 of~\cite{DudekPralat2017}]
\label{thm:Bal-DeBiasio}
Let $r \ge 2$ and suppose $p \gg 1/n$.
Then, with high probability, every 
$r$--edge--coloring 
of the random graph $G\sim G(n,p)$ contains a monochromatic connected component of order at least 
\begin{equation}
\frac{n}{r-1} - o(n).
\end{equation}
\end{theorem}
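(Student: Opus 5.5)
Our approach is to transfer the deterministic Gyárfás--Füredi bound to $G(n,p)$ through a robust counting argument. The only property of $G$ we use is \emph{edge-discrepancy}: for $p\gg 1/n$, with high probability every pair of disjoint vertex sets $X,Y$ with $|X|,|Y|\ge \varepsilon n$ satisfies $e_G(X,Y)=(1\pm\varepsilon)p|X||Y|$, and every $X$ with $|X|\ge\varepsilon n$ spans $(1\pm\varepsilon)p\binom{|X|}{2}$ edges. This follows from Chernoff bounds and a union bound over at most $4^n$ pairs. The failure probability for a fixed pair is $\exp(-c\varepsilon^2 p\varepsilon^2 n^2)$, and this beats $4^n$ exactly when $pn\to\infty$. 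We fix $\varepsilon>0$ and show that every $r$-coloring has a monochromatic component of order at least $n/(r-1)-C\varepsilon n$; letting $\varepsilon\to 0$ slowly then gives the $o(n)$ loss.

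\textbf{Step 1 (reduction to large components).} Fix an $r$-coloring and suppose every monochromatic component has order less than $m:=n/(r-1)-C\varepsilon n$. For each color $i$ we group its components greedily into parts: we add components to a part until the part's size first reaches $\varepsilon n$. This gives a partition $\mathcal P_i$ of $V$ in which every part except possibly one leftover has size between $\varepsilon n$ and $m+\varepsilon n$, and no edge of color $i$ crosses between parts. Because there is no crossing edge of color $i$, color $i$ contributes only edges lying inside the parts of $\mathcal P_i$.

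\textbf{Step 2 (counting).} Every edge of $G$ lies inside a part of $\mathcal P_i$ for its own color $i$, so
\[
\sum_{i=1}^r \sum_{P\in\mathcal P_i} e_G(P)\ \ge\ e(G).
\]
We then mimic the complete-graph argument (Theorem~11 of~\cite{LiuMorrisPrince2009}). Each vertex $v$ has its edges covered by its $r$ parts $P_1(v),\dots,P_r(v)$, whose sizes are all below roughly $m$. On $K_n$ this forces
\[
\sum_i \bigl(|P_i(v)|-1\bigr)\ge n-1,
\]
hence some part has size at least about $(n-1)/r$. The sharper $1/(r-1)$ bound uses a double-counting refinement: one counts pairs of vertices covered twice. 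Alternatively, and more simply, we would apply the known \emph{weighted/fractional} version of Füredi's bound. In its fractional-cover formulation, the $r$-partite intersecting structure gives a monochromatic component of size at least $n/(r-1)$ even when each edge of $K_n$ has weight $1$ but the graph need only have total weight close to $\binom n2$ on every large set.

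Concretely, we form the auxiliary weighted complete graph on the $k=O(1/\varepsilon)$ atoms of the common refinement of the partitions $\mathcal P_i$. Here we discard atoms smaller than $\varepsilon^2 n$, losing at most $O(\varepsilon n)$ vertices. By discrepancy, every pair of large atoms spans edges of $G$, so each pair of atoms receives at least one color connecting them inside some part. This yields an $r$-coloring of a blown-up complete graph: contracting atoms gives an $r$-edge-colored (multi-)complete graph on $k$ atoms with vertex weights. Applying the weighted Gyárfás--Füredi bound produces a monochromatic component of total weight at least $(n-O(\varepsilon n))/(r-1)$, which contradicts the choice of $m$.

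\textbf{Main obstacle.} The difficult part is the weighted Füredi step. We need the version of $m_r(K_n)\ge n/(r-1)$ for vertex-weighted complete graphs, where a single atom pair can carry several colors. Allowing several colors only helps, since we may choose one color per pair. The weighted statement follows from the fractional-matching/cover proof of Füredi, which is linear in the weights; alternatively, we can blow each atom up into a clique of equal-size copies and apply the unweighted theorem. The blow-up route is cleaner. Inside an atom all vertices lie in the same parts of every color, so assigning edges inside a blown-up atom any color keeps components unchanged, and the unweighted bound applies directly.
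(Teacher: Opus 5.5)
The paper does not prove this statement. It quotes it from the two cited works, remarks that their proofs rely on sparse regularity, and proves sharper versions only for $r=2,3$. Your argument is a genuinely different, elementary route, and its core is sound. You group each color's components into at most $1/\varepsilon+1$ parts and pass to the common refinement. You then use one edge between any two large atoms to color the complete graph on the surviving vertices, so that every color-$i$ component lies inside a single part of $\mathcal P_i$. The deterministic bound $m_r(K_{n'})\ge n'/(r-1)$ then gives a part of size at least $n'/(r-1)$, whereas every part has size below $M+\varepsilon n$, with $M$ the largest monochromatic component of $G$.

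One step is wrong as written, though easily repaired. The common refinement of $r$ partitions with at most $1/\varepsilon+1$ parts each can have up to $(1/\varepsilon+1)^r$ atoms, not $O(1/\varepsilon)$. So your bound on the vertices lost by discarding atoms smaller than $\varepsilon^2 n$ becomes about $\varepsilon^{2-r}n$, which is useless already for $r=2$. The fix:
\begin{itemize}
\item Discard atoms smaller than $\delta n$ with $\delta=\varepsilon^{r+1}$ instead. The loss is then below $(1+\varepsilon)^r\varepsilon n\le 2^r\varepsilon n$.
\item Compare $(n-2^r\varepsilon n)/(r-1)$ with $m+\varepsilon n$, not with $m$. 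This shows $C=1+2^r$ suffices.
\item The only random-graph input you need is that any two disjoint sets of size $\delta n$ span an edge. This holds with high probability for fixed $\delta$, since $\binom{n}{\delta n}^2(1-p)^{\delta^2n^2}\le\exp\bigl(2\delta n\log(e/\delta)-p\delta^2n^2\bigr)\to0$ when $np\to\infty$.
\end{itemize}
The two-sided discrepancy, the counting digression of Step~2, and the fractional-cover remark can all be dropped. The fractional remark is not stated precisely enough to cite anyway; the blow-up alone finishes the proof.

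As for what each route buys: yours handles every $r$ at once, uses the $K_n$ theorem as a black box, and needs only the weakest pseudorandom property. However, because there are up to $\varepsilon^{-r}$ atoms, it must work with sets of linear size $\varepsilon^{r+1}n$. So the error is only $o(n)$, roughly $(\log(np)/(np))^{1/(r+1)}n$ as set up. The paper's arguments for $r=2,3$ instead work at the sublinear scale $s_0=400ne^{-h}$, where no bounded partition is available. They use the no-clean-splitting Proposition~\ref{thm:no-split-h} and Corollary~\ref{cor_on_large_component}, and obtain the sharp errors $\Theta(ne^{-h})$ and $\Theta(1/p)$. The price is color-specific structural arguments such as the stability Lemma~\ref{lem:global-splitting}, which is why the case $r\ge4$ is left as a conjecture there.
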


However, their approach relies on the sparse regularity method, developed by Kohayakawa~\cite{Kohayakawa1997} and Rödl (see, e.g.,~\cite{RodlSchacht2007}), which involves substantial technical machinery. Moreover, it yields only an unspecified $o(n)$ error term, without determining its correct order.

In this paper, we determine this error term up to constant factors for the cases of two and three colors. In particular, we obtain sharp bounds on the size of the largest monochromatic connected component that must appear in every coloring of a random graph. Our results reveal that, in contrast to the deterministic setting, the behavior of this parameter depends delicately on the edge probability.

Our approach is entirely elementary and avoids the use of the sparse regularity method. Instead, it is based on a direct analysis of large connected structures arising from suitably balanced vertex partitions.

\begin{theorem}\label{thm:mono-sharp}
Let $G\sim G(n,p)$, where $p\gg 1/n$ and $p=o(1)$, and write $np=he^h$. Then there exist absolute constants $c,C>0$ such that, with high probability, the following statements hold.
\begin{enumerate}
\item[(i)] \textbf{Giant monochromatic component.} Every $2$-edge-coloring of $G$ contains a monochromatic connected component of order at least
\begin{equation}
n-Cne^{-h}.
\end{equation}
\item[(ii)] \textbf{Extremal coloring.} There exists a $2$-edge-coloring of $G$ such that every monochromatic connected component has order at most
\begin{equation}
n-cne^{-h}.
\end{equation}
\end{enumerate}
\end{theorem}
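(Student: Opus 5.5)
The upper bound (i) will come from a reduction to bipartite subgraphs $G[A,B]$ together with two expansion properties of $G(n,p)$ proved by union bounds. The lower bound (ii) will come from an explicit colouring built around a random set of size about $ne^{-h}$. Throughout I use that $np=he^h$ gives $ne^{-h}=h/p\to\infty$ and $h\to\infty$.

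\textbf{Part (ii).} Fix any set $S$ of $s=\lceil ne^{-h}\rceil=\lceil h/p\rceil$ vertices, chosen before exposing $G$. Colour every edge of $G$ with at least one endpoint in $S$ red, and all remaining edges blue.
\begin{itemize}
\item Every blue edge avoids $S$, so each blue component misses all of $S$ and has order at most $n-s$.
\item Every red edge meets $S$, so each red component lies inside $S\cup N(S)$.
\end{itemize}
Let $Z$ be the set of vertices outside $S$ with no neighbour in $S$. Then $|Z|$ is binomial with mean $(n-s)(1-p)^s=(1+o(1))ne^{-ps}=(1+o(1))ne^{-h}$, which tends to infinity. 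By Chernoff, whp $|Z|\ge \tfrac12 ne^{-h}$, so every red component has order at most $n-\tfrac12 ne^{-h}$. This gives (ii) with $c=1/2$.

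\textbf{Part (i), reduction.} Fix a $2$-colouring and assume the largest red component has order less than $n-Cne^{-h}$. I group the red components into two sides $A$ and $B$ with no red edges between them, so every edge of $G[A,B]$ is blue.
\begin{itemize}
\item If some red component has order at least $2n/3$, let $A$ be that component and $B$ the rest. Then $|A|\ge 2n/3$ and $|B|\ge Cne^{-h}$.
\item Otherwise, greedily combine components to get $|A|,|B|\ge n/3$.
\end{itemize}
In both cases $|A|\ge n/3$ and $|B|=b\ge Cne^{-h}$. So it suffices to prove a deterministic-looking statement holding whp simultaneously for all such pairs: $G[A,B]$ has a component containing all but $O(ne^{-h})$ vertices. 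That component is blue, which finishes (i).

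\textbf{Part (i), two uniform properties.} Set $w=\lceil ne^{-h}\rceil$, so $pw\approx h$. I will show that whp:
\begin{enumerate}
\item[(P1)] For every set $U$ with $|U|\ge Kne^{-h}$, fewer than $w$ vertices outside $U$ have no neighbour in $U$.
\item[(P2)] Any two disjoint sets of order at least $Kne^{-h}$ are joined by an edge.
\end{enumerate}
Both follow from the union bound
\[\binom{n}{u}\binom{n}{w}(1-p)^{uw}\le \exp\bigl(u(h+1-\log K)+w(h+1)-uh\bigr),\]
using $\log(en/u)\le h+1-\log K$ and $puw\approx uh$. For $u\ge Kw$ with $K$ a large absolute constant, the exponent is at most $-u(\log K-1)+w(h+1)$, which tends to $-\infty$. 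Summing over $u$ then gives $o(1)$. The same computation handles (P2).

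\textbf{Part (i), finishing.} Apply (P1) with $U=B$ to see that all but fewer than $w$ vertices of $A$ have a neighbour in $B$. Apply (P1) with $U=A$ to see the same for $B$.
\begin{itemize}
\item Split the components of $G[A,B]$ into two groups, each of total order at least $Kne^{-h}$ on the $A$-side. (P2) applied to the $A$-part of one group and the $B$-part of the other produces an edge between the groups, which is a contradiction once the $B$-parts are also large.
\item The $B$-parts are large because each non-isolated $A$-vertex brings its $B$-neighbours into its component, and (P1) controls how few $B$-vertices can attach to a small set of $A$-vertices.
\end{itemize}
The outcome is a single component containing all but $O(Kne^{-h})$ vertices.

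The main obstacle is this last step. Components whose $A$-side is large may have a tiny $B$-side, and vice versa, so ruling out the split requires care. I would first try a layered argument. Take any component $\Gamma$ with $|\Gamma\cap A|\ge Kne^{-h}$. By (P1), all but $w$ vertices of $B$ have a neighbour in $\Gamma\cap A$, so $\Gamma$ contains almost all of $B$. Applying (P1) again with $U=\Gamma\cap B$, $\Gamma$ contains almost all of $A$. It then remains to show that a component with $|\Gamma\cap A|\ge Kne^{-h}$ exists, which again follows from (P1) and (P2).
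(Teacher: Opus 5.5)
\textbf{Part (i).} Your proof of (i) rests on (P1), and (P1) is not established. At $u=Kw$ the exponent you display equals $Kw(1-\log K)+w(h+1)=w\bigl(h+1-K(\log K-1)\bigr)$. This tends to $+\infty$, not $-\infty$, because $K$ is fixed while $h\to\infty$. More generally, with $u=\lambda w$ your bound is $\exp\bigl(w(h+1-\lambda(\log\lambda-1))\bigr)$. It is small only once $\lambda(\log\lambda-1)>h+1$, i.e.\ for $|U|$ of order $(h/\log h)\,ne^{-h}$. For constant $\lambda$, the expected number of pairs $(U,W)$ with no edges between them really is $e^{(1+o(1))wh}$. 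So (P1) with $|U|=O(ne^{-h})$ should not be expected to hold at all.

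Consequently, the existence of a component $\Gamma$ of $G[A,B]$ with $|\Gamma\cap A|\ge Kne^{-h}$ is unproved, and with it (i). You rightly call this step the main obstacle, but you then attribute it to (P1) and (P2).

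What is missing is the observation that the symmetric property (P2) suffices by itself, because only \emph{one} cross pair needs to be large. (P2) is exactly the paper's Lemma~\ref{lem:edge-y0}. Put $y=Kne^{-h}$; with $C=2K$ your reduction gives $|A|\ge 3y$ and $|B|\ge 2y$.
Suppose every component has $A$-side smaller than $y$. Add components until their $A$-side $A_1$ first reaches $y$; then $|A_1|<2y$, so $A_2:=A\setminus A_1$ has $|A_2|>y$. Since $|B|\ge 2y$, one of the two groups has $B$-side of size at least $y$. Applying (P2) to that $B$-side and the other group's $A$-side gives an edge between the groups, a contradiction.
Your layered step, with (P2) in place of (P1), then produces a component missing fewer than $2y$ vertices of $A\cup B$.

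Repaired this way, your route is shorter than the paper's. The paper rules out splittings by a separate union bound over all clean splittings of $s_0\times s_0$ bipartite subgraphs (Proposition~\ref{thm:no-split-h}), and uses Lemma~\ref{lem:edge-y0} only to absorb the leftover vertices.

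\textbf{Part (ii).} Part (ii) also contains an error. The step $(1-p)^s=(1+o(1))e^{-ps}$ requires $p^2s\to 0$, i.e.\ $ph\to 0$, and this does not follow from $p=o(1)$. Since $\log(1-p)\le -p-p^2/2$ and $ps\ge h$, in fact $\mathbb{E}|Z|\le ne^{-h}e^{-ph/2}$. This is $o(ne^{-h})$ when, for example, $p=1/\log\log n$. Then whp $|Z|<\tfrac12 ne^{-h}$, and the bound $n-|Z|$ on red components is no longer of the form $n-cne^{-h}$.

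Taking $|S|=\tfrac12 ne^{-h}$, as the paper does, repairs this. Then $\mathbb{E}|Z|=ne^{-(1/2+o(1))h}\gg ne^{-h}$, because the relative $O(p)$ error in the exponent is absorbed by the $e^{h/2}$ slack. So whp $|Z|\ge ne^{-h}$, and both colours have components of order at most $n-\tfrac12 ne^{-h}$.
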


\noindent\textbf{Remark.}
For the random graph $G(n,p)$ with $f=np=\omega(1)$ and
$f\leq \log n-\omega(1)$, there is a unique \emph{giant connected component} $C_{\text{giant}}$ whose order satisfies
\begin{equation}
|C_{\text{giant}}|=n-(1+o(1))ne^{-f}.
\end{equation}
See, for example,~\cite{Bollobas2001,FriezeKaronski2015} for standard references on the emergence of the giant component in random graphs. This should be compared with the order of the ``giant monochromatic component" obtained in our theorem.

\begin{theorem}\label{thm:three-color}
Let $G\sim G(n,p)$, where $p\gg 1/n$ and $p=o(1)$. Then there exist absolute constants $c,C>0$ such that, with high probability, the following statements hold.
\begin{enumerate}
\item[(i)] \textbf{Large monochromatic component.} Every $3$-edge-coloring of $G$ contains a monochromatic connected component of order at least
\begin{equation}\label{eq:3color-lower}
\frac{n}{2}-\frac{C}{p}.
\end{equation}
\item[(ii)] \textbf{Extremal coloring.} There exists a $3$-edge-coloring of $G$ such that every monochromatic connected component has order at most
\begin{equation}\label{eq:3color-upper}
\frac{n}{2}-\frac{c}{p}.
\end{equation}
\end{enumerate}
\end{theorem}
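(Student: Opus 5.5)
We will prove the two parts separately, using only elementary expansion properties of $G(n,p)$ that hold with high probability when $p\gg 1/n$ and $p=o(1)$. The first is \emph{edge expansion}: for all disjoint sets $X,Y$ with $|X||Y|p\ge K\max(|X|,|Y|)$ for a large constant $K$, there is an edge between $X$ and $Y$. This follows from a union bound, since $(1-p)^{|X||Y|}\binom{n}{|X|}\binom{n}{|Y|}=o(1)$ in this regime. The second is that $e(X,Y)=(1\pm o(1))p|X||Y|$ whenever $|X|,|Y|\ge \varepsilon n$.

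\textbf{Lower bound (i).} Fix a $3$-coloring with colors red, blue and green. We follow Gyárfás's argument for $K_n$ but make it quantitative. Take a largest monochromatic component, say a red component $R$, and suppose $|R|<n/2-C/p$. Let $\mathcal{R}$ be the partition of $V$ into red components. Every edge between different parts of $\mathcal{R}$ is blue or green.

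The key step is to show that the blue-green graph on $V$ spans a monochromatic component of size at least $n/2-C/p$ unless the structure is nearly the four-part extremal configuration. Since every red part has size less than $n/2$, we can group the parts into two sides $U,W$ with $|U|,|W|\ge n/2-|R|$. All $U$--$W$ edges are then blue or green, and by the density property there are $\Theta(pn^2)$ of them.

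Next, consider the blue and green components restricted to the bipartite graph $G[U,W]$. We apply a bipartite two-color lemma, the random analogue of the fact that a $2$-colored complete bipartite graph either has a monochromatic component meeting both sides substantially, or splits as $U=U_1\cup U_2$, $W=W_1\cup W_2$ with blue $=U_1W_1\cup U_2W_2$ and green $=U_1W_2\cup U_2W_1$. In $G(n,p)$ this splitting holds only up to edge expansion. Any set of vertices that does not fit the pattern must have size $O(1/p)$, because a set $S$ of size larger than $K/p$ has edges to every linear-size set.

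We then count sizes. The component $B_1=U_1\cup W_1$ and its siblings have sizes summing to $n$, up to $O(1/p)$ exceptional vertices, so some blue or green component has size at least $n/2-O(1/p)$. This contradicts the assumption once $C$ is large.

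The main obstacle is controlling these exceptional vertices. The threshold $|X||Y|p\gtrsim \max(|X|,|Y|)$ means that a set of size $s$ is guaranteed an edge only to sets of size at least $K/p$, so the losses are exactly of order $1/p$. I would first attempt the cleanup by iteratively moving the small red components, each of size less than $K/p$, into the side where they attach, and then bound the total leftover by $O(1/p)$ using edge expansion applied to the union of the leftover vertices.

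\textbf{Extremal coloring (ii).} We take the four-part construction $A_1,A_2,B_1,B_2$, each of size about $n/4$. We modify it by choosing a set $S$ of roughly $c/p$ vertices and recoloring so that $S$ is split off from the big components. Concretely, put $S\subset A_1$, color edges inside $A$ green except those touching $S$, and choose the colors of $S$'s edges so that $S$ lies in a small component of each color, or in a component that avoids half of the graph. The degree of a vertex of $S$ is about $np$, and $S$ has about $p|S|n/4$ edges into each part. A cleaner approach is to make the parts unbalanced, with $|A|=n/2-c/p$, and to check that each color class has components of size at most $n/2-c/p$. The green components are $A$ and $B$, and the blue and red components each have size about $n/2$; the deficiency is absorbed using vertices of $B$ that have few neighbors in the relevant parts. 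I am least sure of this construction. The natural target is to use the $\Theta(1/p)$ fluctuation coming from vertices whose neighborhoods miss a part, and I would verify the construction using the expectation of degrees into each part.
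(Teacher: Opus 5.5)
\textbf{Part (ii) has a genuine gap.} You never specify a colouring, and the one concrete variant you give fails. With $|A|=n/2-c/p$, the set $B=V\setminus A$ has order $n/2+c/p$ and is itself a green component, which is already above the target. Recolouring the edges at an arbitrary $S\subseteq A_1$ does not help either. A typical vertex has neighbours in all four parts, and such a vertex is forced into a block-sized component; this is exactly the green-rule mechanism of part (i), so nothing is saved. Nor can $S$ lie in small components of every colour while it has edges leaving $S$.

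Your closing intuition (vertices whose neighbourhoods miss a part) is right, but it has to be engineered. The missing input is the neighbourhood bound behind the paper's construction. For $r=c_0/p$ with $c_0<\log(5/4)$, with high probability every $r$-set $X$ satisfies $|N(X)|<n/5$: the mean is $(1-e^{-c_0}+o(1))n$, and Chernoff plus $\binom{n}{r}=e^{o(n)}$ gives the union bound. Also, $G$ has an independent set $X$ of size $r$. The construction then runs as follows:
\begin{itemize}
\item Partition $V\setminus X$ into the four blocks with $N(X)\subseteq A_1$.
\item Split $X=X_1\cup X_2\cup X_3$ and give every edge at $X_i$ colour $i$.
\item The colour-$i$ component meeting $X_i$ then lies in $X_i$ together with $A$, $A_1\cup B_1$ or $A_1\cup B_2$, each of order $(n-r)/2$.
\end{itemize}
So every component has order at most $(n-r)/2+r/3=n/2-r/6$. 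The saving comes from removing $X$ from every block (gain $r/2$), while each $X_i$ uses only colour $i$ and rejoins a single block (cost $r/3$). Neither the confinement of $N(X)$ to one quarter nor the three-way split appears in your sketch.

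\textbf{Part (i) takes a genuinely different route, and it is viable.} Grouping the red components into two sides reduces everything, as in Gy\'arf\'as's argument for $K_n$, to a single blue--green bipartite graph $G[U,W]$. The paper instead applies Corollary~\ref{cor_on_large_component} repeatedly (its $O(ne^{-h})=O(h/p)$ error is too weak by itself) to build a global three-colour four-part structure. It performs the $O(1/p)$ cleanup only at the end, via Lemma~\ref{lem:expansion-3quarters} and the green rule. Your reduction needs only two colours and one cleanup, but three points must be repaired.

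(1) Your edge-expansion property is false as stated. For $|X|=|Y|=K/p$ it would forbid independent sets of order $2K/p$, whereas $\alpha(G(n,p))=\Theta(\log(np)/p)$. The union bound fails because $\binom{n}{K/p}^2\approx e^{(2K/p)\log(np)}$ swamps $e^{-K^2/p}$. Only the version with one side of order $\delta n$ holds, with $K=K(\delta)$; this is essentially Lemma~\ref{lem:expansion-3quarters}, and it is all you need.

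(2) The bound $|U|,|W|\ge n/2-|R|$ can be as small as $C/p$. Instead, group the red components so that $|U|,|W|\ge n/4$; this is possible since each has order below $n/2$.

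(3) The bipartite two-colour lemma is asserted, not proved. You need three ingredients.
\begin{itemize}
\item For linear $X,Y$, $G[X,Y]$ has a component missing fewer than $2K/p$ vertices. A component with linear trace on $X$ exists by a balanced-union argument, and expansion then bounds what it misses on each side.
\item A monochromatic component with linear traces on both $U$ and $W$. After that, either some colour has a component of order at least $n/2$ outright, or you obtain the split $U_1,U_2,W_1,W_2$ with linear parts.
\item The cleanup. A vertex of $U$ with neighbours in both $W_1$ and $W_2$ can send neither blue edges to both nor green edges to both, so it lies in one of the two large blue components. At most $4K/p$ vertices miss one of the four parts, which gives a blue component of order at least $n/2-2K/p$.
\end{itemize}
Your step of moving small red components between sides is unnecessary.
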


\noindent\textbf{Remark.}
Note that in the cases $r=2$ and $r=3$, the quantities $n - m_2(G(n,p))$ and $\frac{n}{2} - m_3(G(n,p))$ have different magnitudes. 
The former is $\Theta(n e^{-h})$ when $np = h e^{h}$, 
while the latter is $\Theta(\frac{1}{p})$. 
We believe that for $r \ge 4$, this latter magnitude $\Theta(\frac{1}{p})$ is the correct one for $\frac{n}{r-1} - m_r(G(n,p))$. 
This leads to the following conjecture.

\medskip
\noindent\textbf{Conjecture.} 
Let $r\ge 4$ be a fixed integer, and suppose $G\sim G(n,p)$, where $p\gg 1/n$ and $p=o(1)$. Then, with high probability,
\begin{equation}
\frac{n}{r-1}-m_r\bigl(G(n,p)\bigr)=
\Theta\Bigl(\frac{1}{p}\Bigr).
\end{equation}

Using a similar method, we can also obtain an analogous result for the bipartite random graph model $G(n,n,p)$.

\begin{theorem}\label{thm:bipartite-mono}
Let $G\sim G(n,n,p)$, where $p\gg 1/n$ and $p=o(1)$, and write $np=he^h$. Then there exist absolute constants $c,C>0$ such that, with high probability, the following statements hold.
\begin{enumerate}
\item[(i)] \textbf{Two large monochromatic components.} Every $2$-edge-coloring of $G$ contains two monochromatic connected components $C_1$ and $C_2$ such that
\begin{equation}
|C_1\cup C_2|\geq 2n-Cne^{-h}.
\end{equation}
\item[(ii)] \textbf{Extremal coloring.} There exists a $2$-edge-coloring of $G$ such that every pair of monochromatic connected components $C_1$ and $C_2$ satisfies
\begin{equation}
|C_1\cup C_2|\leq 2n-cne^{-h}.
\end{equation}
\end{enumerate}
\end{theorem}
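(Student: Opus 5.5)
We prove Theorem~\ref{thm:bipartite-mono} by transplanting the proof of Theorem~\ref{thm:mono-sharp} to the bipartite setting, with sides $U,W$, $|U|=|W|=n$. In $G(n,n,p)$ with $np=he^h$, the number of isolated vertices on each side is concentrated around $n(1-p)^n\approx ne^{-np}$. This is far smaller than $ne^{-h}$. The scale $ne^{-h}$ comes from an optimisation: we remove (or isolate in one colour) a set of $k$ vertices, and require that only few vertices have all their neighbours inside the removed part. This is the same trade-off as in the non-bipartite theorem.

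\textbf{Lower bound (i).} We first record the pseudorandom properties that hold with high probability. \emph{Expansion:} for all $S\subseteq U$, $T\subseteq W$ with $|S|,|T|\ge K/p$ (suitable absolute $K$), there is an edge between $S$ and $T$. This follows by a union bound, since $\binom{n}{s}\binom{n}{t}(1-p)^{st}\le \exp(s\log(en/s)+t\log(en/t)-pst)$, which is tiny once $ps\gtrsim \log(en/t)$ and vice versa. \emph{Low degree into small sets:} for every set $X$ of size $x\ge Cne^{-h}$, the number of vertices with at least $d$ neighbours outside $X$ is small. More precisely, we need to bound the number of vertices having at most $\delta np$ neighbours outside a set of size about $n-|X|$. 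We use Chernoff together with a union bound over sets, in the form used for Theorem~\ref{thm:mono-sharp}.

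Now fix a $2$-colouring and consider the red and blue components. We use the standard bipartite Gy\'arf\'as-type argument. For the complete bipartite graph $K_{n,n}$, in any $2$-colouring either one colour class is connected and spanning, or the vertex set is covered by two monochromatic components (one red and one blue, or two of the same colour forming a "cross" structure). We imitate this with approximate versions. Take a largest red component $R$ and let $R_U=R\cap U$, $R_W=R\cap W$. By expansion, there are no red edges from $R$ to the outside, so every edge between $R_U$ and $W\setminus R_W$, and between $U\setminus R_U$ and $R_W$, is blue. We split into cases according to whether these parts have size at least $K/p$. In each case, expansion shows that the blue edges among such large sets join them into at most two blue components, and that together with $R$ these cover all vertices except those in sets of size $<K/p$ or vertices with few neighbours into the big parts.

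The vertices that escape are exactly those whose neighbourhood lies almost entirely inside a small exceptional set. The second pseudorandom property bounds their number by $O(ne^{-h})$. Note that $K/p=O(n/(he^h))\le O(ne^{-h})$, so the small sets are also absorbed. This case analysis is the main obstacle. We must check that in every configuration, including unbalanced ones where $R_U$ is large but $R_W$ is small, two components suffice, and that the losses are only $O(ne^{-h})$ rather than $O(1/p)$ times a logarithm. If needed, we iterate, replacing $R$ by the component that is largest after discarding low-degree vertices.

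\textbf{Extremal colouring (ii).} Choose $A\subseteq U$ with $|A|=a=\lfloor ne^{-h}\cdot h\rfloor$, roughly $n/p\cdot$ const, tuned so that the expected number of vertices $w\in W$ with $N(w)\subseteq A$ or with $N(w)\cap A=\emptyset$ behaves appropriately. Colour red all edges incident to $A$, and colour everything else blue, except that we arrange for the set $Z\subseteq W$ of vertices whose neighbours all lie outside $A$ to be separated. Concretely, a vertex $w\in W$ has no neighbour in $A$ with probability $(1-p)^a$, and has all its neighbours in $A$ with probability roughly $e^{-np}(\ldots)$. We pick $a$ so that the set $Z_1$ of $W$-vertices adjacent only to $A$, and the set $Z_2$ of $W$-vertices not adjacent to $A$, each have size $\Theta(ne^{-h})$ with high probability; the balance $np\,e^{-pa}\approx$ const gives the $he^h$ parametrisation. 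Then every red component misses $Z_2$ together with $U\setminus A$, and every blue component misses $A\cup Z_1$. A union of two components therefore misses $\Omega(ne^{-h})$ vertices. Concentration of $|Z_1|$ and $|Z_2|$ follows from Chebyshev's inequality, and finally we verify the remaining pairs of components (two red, or two blue) directly.
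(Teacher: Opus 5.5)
\textbf{Part (ii): the colouring does not work.} Every red component lies in $A\cup N(A)$, so it misses $(U\setminus A)\cup Z_2$. Every blue component misses $A\cup Z_1$. A red and a blue component together are therefore only forced to miss the \emph{intersection} of these sets, namely $Z_1\cap Z_2$, which is the set of isolated vertices of $W$ (about $ne^{-np}\ll ne^{-h}$ of them). Your conclusion ``a union of two components misses $\Omega(ne^{-h})$'' uses the union of the missed sets instead. Indeed, for $a$ of the order you propose, the giant red component of $G[A,W]$ and the giant blue component of $G[U\setminus A,W]$ together miss only $o(ne^{-h})$ vertices. (Also, $|Z_1|\approx ne^{-p(n-a)}$ and $|Z_2|\approx ne^{-pa}$ cannot both be $\Theta(ne^{-h})$; with $a=hne^{-h}$ you get $|Z_2|\approx ne^{-h^2}$.) What is needed is a set of $\Theta(ne^{-h})$ vertices that no blue component touches and that no single red component can cover. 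The paper fixes $X\subseteq U$ with $|X|=r=\frac12 ne^{-h}$ and $Y\subseteq W\setminus N(X)$ with $|Y|=r$. Such a $Y$ exists because with high probability every $r$-set has $|N(X)|\le n-2r$ (Lemma~\ref{lem:anti-neigh}). It then colours $G[U\setminus X,W\setminus Y]$ blue and every edge meeting $X\cup Y$ red. Since $E(X,Y)=\emptyset$, each red component meets only one of $X,Y$ and has order at most $r+(n-2r)=n-r$. Hence every pair of components misses at least $r$ vertices.

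\textbf{Part (i): the decisive step is missing.} First, your expansion property is false for an absolute $K$: any $S\subseteq U$ with $|S|=K/p$ has about $ne^{-K}\gg K/p$ non-neighbours in $W$. Your own union bound needs $ps\gtrsim\log(enp)$, i.e.\ a threshold of $\Theta(\log(np)/p)=\Theta(ne^{-h})$, which is the paper's $y_0=4ne^{-h}$ (Lemma~\ref{lem:edge-y0}). From this you still have to derive that $G[X',Y']$ has a single component covering all but $O(ne^{-h})$ vertices whenever $|X'|,|Y'|\ge s_0$ (Corollary~\ref{cor_on_large_component}, via clean splittings); you only assert it. Second, and this is the real gap, your red component $R$ may be unbalanced. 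Suppose $|R_U|$ is small while $|R_W|$ and $|W\setminus R_W|$ are large. Then $G[R_U,W\setminus R_W]$ has a small side, and nothing controls how the possibly linear-size set $W\setminus R_W$ is covered, since $G[U\setminus R_U,W\setminus R_W]$ is coloured arbitrarily. The paper first produces a monochromatic component $C_1$ with at least $n/2-5y_0$ vertices on each side (second part of Corollary~\ref{cor_on_large_component}(b)). Only then do the all-red graphs $G[U\setminus C_1,C_1\cap W]$ and $G[C_1\cap U,W\setminus C_1]$ have large sides whenever needed, and the three cases close with loss $O(ne^{-h})$. Within your framework, you can get balance by taking $R$ to be a largest monochromatic component overall, not just a largest red one. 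If $|R_W|,|U\setminus R_U|\ge s_0$, the blue component of $G[U\setminus R_U,R_W]$ has order at least $n-|R_U|+|R_W|-3y_0$, and $|R|\ge$ this forces $|R_U|\ge n/2-\frac32 y_0$. The same holds symmetrically for $R_W$. So $R$ is balanced unless both its sides are smaller than $s_0$, and that case is excluded by the grouping argument of the Claim in Corollary~\ref{cor_on_large_component}. ``Iterating after discarding low-degree vertices'' does not substitute for this.
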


Before closing the introduction, let us comment on some generalities. 
For standard notation in graph theory and probabilistic combinatorics, as well as certain basic tools, such as Chernoff's bounds, 
we refer to~\cite{alon2016probabilistic} for convenient reference, and will not cite each time we apply it. 
Also, throughout the paper we will often ignore floors and ceilings whenever they are not essential.
For some more specific (still standard) properties of random graphs, we have chosen to present the relevant results within the corresponding arguments rather than collecting them in a preliminary section, in the hope of improving the overall readability of the proofs.

Below, in Section~\ref{extremal_colouring} we prove part~(ii) of all three theorems. 
Section~\ref{large_components} establishes a technical proposition and applies it to prove part~(i) of Theorems~\ref{thm:mono-sharp} and~\ref{thm:bipartite-mono}. 
Finally, Section~\ref{ioftheorem3} is devoted to the proof of part~(i) of Theorem~\ref{thm:three-color}.

\section{Extremal colorings}\label{extremal_colouring}

In this section, we will
focus on constructing the extremal colorings for all three theorems.

\begin{proof}[Proof of (ii) of Theorem~\ref{thm:mono-sharp}.]
Assume $f=np=h e^{h}$. 
Set $r=\tfrac{1}{2}n e^{-h}$.
Fix a vertex subset $R\subseteq V$ with $|R|=r$ and put $A=V \setminus R$.
In the graph $G$, we color each present edge with both endpoints in $A$ or both in $R$ blue, and each present edge between $A$ and $R$ red.

Blue edges appear only inside $A$ or $R$, so every blue component is contained in one of them and thus has size at most $|A|=n-r$.

In the red subgraph, a vertex $v\in A$ is isolated precisely when it has no neighbor in $R$ in the underlying graph $G$. 
This happens with probability $(1-p)^r$. 
Since $pr=\frac{f}{n}\,r=\frac{f e^{-h}}{2}=h/2$ and $p=o(1)$, we have
\begin{equation}
\mathbb P \Big(v \notin \bigcup_{u\in R}N(u) \Big)
=(1-p)^r
=\exp\bigl(-(1/2+o(1))h\bigr).
\end{equation}
Let $X$ denote the number of vertices in $A$ with no red neighbor.
Then $X\sim\mathrm{Bin}\bigl(n-r,(1-p)^r\bigr)$, and therefore
\begin{equation}
 \mu= 
\mathbb E X=(n-r)(1-p)^r=n\exp\bigl(-(1/2+o(1))h\bigr).
\end{equation}
Since $X$ is binomial, the Chernoff lower-tail bound gives
\begin{equation}
\mathbb P\left(X<\tfrac12\mu\right)
\leq \exp\left(-\frac{\mu}{8}\right)
=o(1),
\end{equation}
because $\mu\to\infty$. Moreover,
\begin{equation}
\frac{\mu}{r}
=
2\exp\bigl((1/2-o(1))h\bigr)\to\infty,
\end{equation}
and hence $\mu/2\geq r$ for all sufficiently large $n$. Therefore, with high probability,
\begin{equation}
X\geq \tfrac12\mu\geq r.
\end{equation}
These $X$ vertices are isolated in red, so every red component has size at most $n-r$. Since both colors yield components of size at most $n-r$, 
the largest monochromatic connected component has size at most 
$n-cne^{-h}=n-\tfrac{1}{2}n e^{-h}$ with high probability, where we choose $c=\frac12$.
\end{proof}

Let us now record a standard expansion anti-neighborhood estimate. For a set \(X\subseteq V(G)\), let $N(X)$ denote the neighborhood of $X$ in $V(G)\setminus X$, that is,
\begin{equation}
N(X):=\{\,v\in V(G)\setminus X:\ \exists\,x\in X\ \text{with}\ xv\in E(G)\,\}.
\end{equation}

\begin{lemma}\label{lem:anti-neigh}
Suppose $p=p(n)$ satisfies $p=o(1)$ and
$f=np=he^h=\omega(1)$, and set \(r:=\frac12 ne^{-h}\).
Let \(G\sim G(n,n,p)\) be the random bipartite graph with parts \((A,B)\).
Then, with high probability, every subset \(X\subseteq A\) (and likewise every \(X\subseteq B\))
of size \(r\) satisfies
\begin{equation}\label{eq:anti-neigh-bip-new}
|N(X)| \le n-2r.
\end{equation}
\end{lemma}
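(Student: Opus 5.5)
The plan is a first-moment (union bound) argument over all $r$-subsets $X\subseteq A$, combined with the Chernoff lower tail for the number of vertices of $B$ that see no vertex of $X$. The computation mirrors the one in the proof of part (ii) of Theorem~\ref{thm:mono-sharp}. Since $G$ is bipartite, $N(X)\subseteq B$ for $X\subseteq A$. So $|N(X)|=n-Y_X$, where $Y_X$ is the number of vertices of $B$ with no neighbor in $X$, and \eqref{eq:anti-neigh-bip-new} is equivalent to $Y_X\ge 2r$. The case $X\subseteq B$ is symmetric.

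\textbf{Step 1: basic asymptotics.} First record the facts needed about $r$ and $h$. Since $f=he^h\to\infty$, we have $h\to\infty$. Since $p=o(1)$, we have $e^h\le f=o(n)$, so $r=\tfrac12 ne^{-h}\ge n/(2f)\to\infty$.

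\textbf{Step 2: a single fixed set.} Fix $X\subseteq A$ with $|X|=r$. The events ``$b$ has no neighbor in $X$'' for $b\in B$ are independent. Hence $Y_X\sim\mathrm{Bin}(n,(1-p)^r)$. Because $pr=h/2$ and $p=o(1)$, we get $(1-p)^r=\exp(-(1+O(p))h/2)$. Since $ph=o(1)$ in this range ($h\le \log f\le\log n$), this gives
\[
\mu:=\mathbb E Y_X=(1+o(1))\,ne^{-h/2},\qquad \frac{\mu}{r}=(2+o(1))e^{h/2}\to\infty .
\]
In particular $2r\le \mu/2$ for large $n$. The Chernoff lower tail then gives
\[
\mathbb P(Y_X<2r)\le \mathbb P(Y_X<\mu/2)\le e^{-\mu/8}.
\]

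\textbf{Step 3: union bound.} The number of choices of $X$ is at most $\binom{n}{r}\le (en/r)^r=\exp\bigl(r(h+\log(2e))\bigr)$. The failure probability is therefore at most
\[
\exp\Bigl(r\bigl(h+2-\tfrac{\mu}{8r}\bigr)\Bigr)=\exp\Bigl(r\bigl(h+2-(\tfrac14+o(1))e^{h/2}\bigr)\Bigr).
\]
This tends to $0$ because $e^{h/2}\gg h$ and $r\to\infty$. Doubling the bound to cover subsets of $B$ completes the argument.

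The main obstacle is Step 3: checking that the Chernoff exponent $\mu/8$ beats the entropy term $r\log(en/r)\approx rh$. This works precisely because $\mu/r\asymp e^{h/2}$ grows faster than $h$. The same comparison is where one must verify that the $(1+O(p))$ correction in the exponent of $(1-p)^r$ is harmless, which holds since $ph=o(1)$.
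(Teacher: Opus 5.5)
Your proposal is correct and follows the paper's proof essentially step for step. Both use a Chernoff lower-tail bound for $|B\setminus N(X)|\sim\mathrm{Bin}(n,(1-p)^r)$ with mean $\mu\gg r$, followed by a union bound over the at most $\binom{n}{r}\le e^{r(h+2)}$ choices of $X$, which works because $e^{h/2}\gg h$.

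One slip needs correcting: $ph=o(1)$ does not hold throughout the allowed range. For $p=1/\log\log n$ one has $h\sim\log n$, so $ph\to\infty$, and then $\mu=(1+o(1))ne^{-h/2}$ fails. However, $p=o(1)$ alone gives $(1-p)^r\ge e^{-(1+p)h/2}=e^{-(1/2+o(1))h}$, hence $\mu/r\ge 2e^{(1/2-o(1))h}\gg h$. That is all your Steps 2--3 need, and it is exactly the form the paper uses.
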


\begin{proof}
Fix \(X\subseteq A\) with \(|X|=r\).
For any \(b\in B\), the event \(b\notin N(X)\) holds with probability
\begin{equation}
(1-p)^r
=\exp\bigl(-pr+O(p^2r)\bigr)
=\exp\bigl(-(1/2+o(1))h\bigr),
\end{equation}
since \(pr=\dfrac{he^{h}}{n}\cdot \dfrac12 ne^{-h}=\tfrac12 h\).
Hence \(|B\setminus N(X)|\sim \mathrm{Bin}\big(n,(1-p)^r\big)\) and
\begin{equation}
\mu:=\mathbb{E}\big[|B\setminus N(X)|\big]= n e^{-(\tfrac12+o(1)) h}.
\end{equation}
The ``bad” event \(|N(X)|>n-2r\) is equivalent to \(|B\setminus N(X)|<2r=n e^{-h}\). Since \(\mu\gg 2r\) 
(indeed \(2r/\mu=e^{-h/2+o(h)}\)),
the deviation is linear in \(\mu\).
By the Chernoff lower-tail inequality,
\begin{equation}
\mathbb{P}\big(|N(X)|>n-2r\big)
=\mathbb{P}(|B\setminus N(X)|<2r)
\le \exp\!\Big(-\frac{(\mu-2r)^2}{2\mu}\Big)
=\exp\!\big(-\Omega(n e^{-h/2+o(h)})\big).
\end{equation}
Taking a union bound, and noting that 
$\binom{n}{r} \leq (\frac{en}{r})^r\leq \exp(r\log (en/r))\leq \exp(r(h+2))$. The 
probability that there exists a subset $X\subset A$ with $|X|=r$ and $|B\backslash N(X)|<2r$ is at most 
\begin{equation}
\binom{n}{r}\,\exp\!\big(-\Omega(n e^{-h/2+o(h)})\big) 
\le \exp\!\Big(r(h+2)-\Omega(n e^{-h/2+o(h)})\Big)=o(1),
\end{equation}
since \(r(h+2)=\tfrac12 n e^{-h}(h+2)=o(n e^{-h/2+o(h)})\) (equivalently \(e^{h/2}\gg h+1\)).
This proves \eqref{eq:anti-neigh-bip-new}.
\end{proof}

\begin{proof}[Proof of part (ii) of Theorem~\ref{thm:bipartite-mono}]
Assume \(f = np = h e^h\) and set \(r = \tfrac12 n e^{-h}\), and let \(G \sim G(n,n,p)\) with bipartition \((A,B)\).
Choose and fix \(X \subseteq A\) with \(|X|=r\). 
By Lemma~\ref{lem:anti-neigh}, with high probability, we have
\(|B\setminus N(X)|\ge 2r\), hence we can choose
\(Y \subseteq B \setminus N(X)\) with \(|Y|= r\) and therefore
\(E(X,Y)=\varnothing\) deterministically.

Put \(A_1 = A \setminus X\) and \(B_1 = B \setminus Y\).
Define the coloring as follows:
color each present edge between \(A_1\) and \(B_1\) blue,
and each present edge between \(X\) and \(B_1\) or between \(Y\) and \(A_1\) red.

Blue edges appear only between \(A_1\) and \(B_1\), so every blue component is contained in \(A_1 \cup B_1\); in particular the largest blue component has order at most
\begin{equation}
|A_1 \cup B_1| = 2n - 2r.
\end{equation}

Every red edge ends in either \(X\) or \(Y\). For any red connected component, namely \(C_X\) which intersects \(X\), since we know that \(|N(X)|\leq n-2r\) due to Lemma~\ref{lem:anti-neigh}, it follows that
\begin{equation}
|C_X| \leq  |X|+|N(X)| \le r + (n-2r) = n-r.
\end{equation}
By symmetry, we conclude that any red connected component \(C_Y\) which intersects
\(Y\) satisfies that
\(|C_Y|\leq n-r\) also.

Moreover, if a red component intersects \(X\), then it is contained in
\(X\cup B_1\), so together with any blue component it misses all vertices
of \(Y\). Similarly, if a red component intersects \(Y\), then together
with any blue component it misses all vertices of \(X\). Hence, one blue
component and one red component cover at most \(2n-r\) vertices. In conclusion, the following holds under the above coloring.
Any two blue monochromatic connected components
cover at most \(2n-2r\) vertices. Any two red components cover at most \(2n-2r\) vertices. One blue and one red component cover at most \(2n-r\) vertices.
The proof is completed.
\end{proof}

\begin{proof}[Proof of Theorem~\ref{thm:three-color} (ii)]
Fix a small constant $c_{0}\in(0,\log(5/4))$, set $c=c_{0}/6$,
and let $r=c_{0}/p$. For a fixed $X\subseteq V(G)$ with $|X|=r$, each vertex
$v\in V(G)\setminus X$ lies in $N(X)$ with probability
\begin{equation}
q
=1-(1-p)^r
=1-\exp\bigl(-pr+O(p^2r)\bigr)
=1-e^{-c_{0}}+o(1),
\end{equation}
since $pr=c_{0}$ and $p^2r=c_{0}p=o(1)$. Hence
\begin{equation}
|N(X)|\sim\mathrm{Bin}(n-r,q),
\qquad
\mu:=\mathbb E|N(X)|
=(1-e^{-c_{0}}+o(1))n.
\end{equation}

Since $c_{0}<\log(5/4)$, we have $1-e^{-c_{0}}<1/5$. Thus, for some
constant $\varepsilon>0$ and all sufficiently large $n$,
\begin{equation}
\mu\leq \left(\frac15-\varepsilon\right)n.
\end{equation}
Therefore, by the Chernoff upper-tail bound,
\begin{equation}
\mathbb P\left(|N(X)|\geq\frac n5\right)
\leq \exp(-\Omega(n)).
\end{equation}

The number of possible sets $X$ is at most
\begin{equation}
\binom nr
\leq
\exp\left(r\log\frac{en}{r}\right)
=
\exp\left(
\frac{c_{0}}{p}
\log\left(\frac{enp}{c_{0}}\right)
\right)
=
\exp(o(n)),
\end{equation}
since
\begin{equation}
\frac{\log(np)}{np}=o(1).
\end{equation}
Hence, by the union bound, with high probability every set
$X\subseteq V(G)$ of size $r$ satisfies
\begin{equation}
|N(X)|<\frac n5.
\end{equation}

\smallskip
By the standard elementary lower bound for the independence number of $G(n,p)$, with high probability $G$ contains an independent set 
$X$ of size $r$. Partition $X$ evenly into three subsets
$X_{1},X_{2},X_{3}$.

Split the remaining vertices into two equal parts $A$ and $B$, and further
decompose $A=A_{1}\cup A_{2}$ and $B=B_{1}\cup B_{2}$ so that the four
parts have equal sizes, up to rounding. Since $r=o(n)$ and
$|N(X)|\leq n/5$, we can choose these partitions such that
$N(X)\subseteq A_{1}$.

We now define a $3$-edge-coloring of $G$ as follows:
assign color~1 to all edges inside $A$ and inside $B$;
assign color~2 to all edges in the bipartite graphs
$G[A_{1},B_1]$ and $G[A_{2},B_2]$;
assign color~3 to all edges in the bipartite graphs
$G[A_{1},B_{2}]$ and $G[A_{2},B_1]$.
Finally, color every edge incident to $X_i$ with color~$i$, for
$i=1,2,3$.

\smallskip
Before adding the vertices of \(X\), every monochromatic component has order
at most \((n-r)/2\). Since \(X\) is independent and \(N(X)\subseteq A_1\),
every color-\(i\) component meeting \(X_i\) is contained in \(X_i\) together
with \(A\), \(A_1\cup B_1\), or \(A_1\cup B_2\), according as \(i=1,2,3\).
Each corresponding block has order at most \((n-r)/2\), and
\(|X_i|=r/3\). Hence every monochromatic component in \(G\) has order at most
\begin{equation}
\frac{n-r}{2}+\frac{r}{3}
=\frac{n}{2}-\frac{c}{p},
\end{equation}
where \(c=c_0/6\), and thus~\eqref{eq:3color-upper} is established.
\end{proof}

\section{Existence of large monochromatic components}\label{large_components}

\begin{definition}[Splitting of a bipartite graph]\label{def:splitting}
Let $G$ be a bipartite graph with parts $(X,Y)$.
A \emph{splitting} of $G[X,Y]$ is a quadruple of subsets
\begin{equation}
X_1,X_2\subseteq X,\qquad Y_1,Y_2\subseteq Y,
\end{equation}
forming partitions $X=X_1\sqcup X_2$ and $Y=Y_1\sqcup Y_2$.
We call a \emph{crossing edge} for a splitting 
any edge in $E(X_1,Y_2)\cup E(X_2,Y_1)$.
A \emph{clean splitting} is a splitting that does not admit any crossing edges.
Among $X_1\cup Y_1$ and $X_2\cup Y_2$, we call the one of smaller cardinality the 
\emph{smaller block} (ties broken arbitrarily).
\end{definition}

The following proposition ensures no clean splitting exists between 
any pair of vertex subsets of size $s_0$. We define two parameters and we will frequently use them. 
 Define
\begin{equation}\label{s_0y_0def}
y_0 = 4
n e^{-h},  \qquad s_0=100 y_0.
\end{equation}

\begin{prop}\label{thm:no-split-h}
Let $p=p(n)$ be such that $np=f=he^h=\omega(1)$. Let $y_0$ and $s_0$ be given by~\ref{s_0y_0def}. 
Then with high probability  the following holds. 
\begin{enumerate}
   \item[(a)] \textbf{(Erd\H{o}s--R\'enyi model).} 
Let 
   $G\sim G(n,p)$. For any disjoint vertex subsets,
$X, Y$ with $|X|= |Y|= s_0$, 
the induced bipartite graph $G[X,Y]$ admits no clean splitting whose smaller block $S$ satisfies
\begin{equation}
|S|=\min\{|X_1\cup Y_1|,\;|X_2\cup Y_2|\}> y_0.
\end{equation}
   \item[(b)] \textbf{(Bipartite model).} 
Let 
   $G\sim G(n,n,p)$ on the vertex bipartition $A,B$. 
   For any  disjoint vertex subsets,
$X\subseteq A, Y\subseteq B$,  with $|X|= |Y|= s_0$, 
the induced bipartite graph $G[X,Y]$ admits no clean splitting whose smaller block $S$ satisfies
\begin{equation}
|S|=\min\{|X_1\cup Y_1|,\;|X_2\cup Y_2|\}> y_0.
\end{equation}
\end{enumerate}
\end{prop}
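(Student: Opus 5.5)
The plan is a first-moment (union bound) argument. We will not union bound over the pair $(X,Y)$ together with the full splitting, which is too costly. Instead we extract from any bad clean splitting a single pair of disjoint vertex sets of controlled sizes that span no edges. The basic observation: if $X=X_1\sqcup X_2$, $Y=Y_1\sqcup Y_2$ is a clean splitting, then $E(X_1,Y_2)=E(X_2,Y_1)=\varnothing$. In model (b) these are pairs $P\subseteq A$, $Q\subseteq B$; in model (a) they are disjoint pairs. Write $a=|X_1|$, $b=|Y_1|$, and assume $S=X_1\cup Y_1$ is the smaller block, so $y_0<k:=a+b\le s_0$. We record the two identities
\[
p s_0=400h \quad\text{and}\quad \log\frac{en}{s_0}=h+1-\log 400 .
\]
Thus choosing a set of size $t\le s_0$ costs at most $\exp\bigl(t(h+O(1))\bigr)$, while an empty pair of sizes $t_1,t_2$ has probability $(1-p)^{t_1t_2}\le e^{-pt_1t_2}$.

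\textbf{Case 1: $k\le s_0/2$.} By symmetry between $X_1$ and $Y_1$, assume $a\ge k/2>y_0/2$. Then $|Y_2|=s_0-b\ge s_0-k\ge s_0/2$. Passing to subsets, there exist disjoint sets $P,Q$ with $|P|=\lceil y_0/2\rceil$ and $|Q|=s_0/2$, and no edges between them (in model (b), with $P\subseteq A$, $Q\subseteq B$). The expected number of such pairs is at most
\[
\exp\Bigl(\tfrac{y_0}{2}(h+O(1))+\tfrac{s_0}{2}(h+O(1))-p\cdot\tfrac{y_0}{2}\cdot\tfrac{s_0}{2}\Bigr).
\]
Since $p y_0 s_0/4 = h s_0$ and $y_0=s_0/100$, the exponent is at most $-\bigl(\tfrac12-\tfrac1{200}\bigr)hs_0+O(s_0)$. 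This tends to $-\infty$ because $h\to\infty$ and $s_0\to\infty$. (In the regime of the paper we have $ne^{-h}\to\infty$; if $s_0$ is bounded the estimate needs to be read with care, but the bound $e^{-\Omega(hs_0)}\to0$ still holds whenever $hs_0\to\infty$.)

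\textbf{Case 2: $s_0/2<k\le s_0$.} Now both blocks have size at least $s_0/2$. We use
\[
a+(s_0-b)+b+(s_0-a)=2s_0 ,
\]
with $a+b\ge s_0/2$ and $(s_0-a)+(s_0-b)\ge s_0$. We claim that one of the empty pairs $(X_1,Y_2)$ or $(X_2,Y_1)$ has both sides of size at least $s_0/8$. If $a<s_0/8$, then $b>3s_0/8$ and $s_0-a>7s_0/8$, so $(X_2,Y_1)$ works. Symmetrically if $b<s_0/8$. If both $a,b\ge s_0/8$, then $X_2$ or $Y_2$ has size at least $s_0/2$ (since $(s_0-a)+(s_0-b)\ge s_0$), and the corresponding pair works. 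Union bounding over pairs of sets of size $s_0/8$ gives at most
\[
\exp\bigl(\tfrac{s_0}{4}(h+O(1))-p s_0^2/64\bigr)=\exp\bigl(\tfrac{s_0}{4}(h+O(1))-6.25\,hs_0\bigr)=o(1).
\]

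Combining the two cases, the events in (a) and (b) each fail with probability $o(1)$. The two models are handled identically. The only difference is that in (b) the sets lie on opposite sides of the bipartition, which only reduces the count, and the edge events remain independent.

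The step I expect to be the main obstacle is the bookkeeping in Case 1. A naive union bound over $(X,Y)$ and the whole splitting gives entropy about $2s_0h$ against a probability exponent of only about $2hs_0$, which is borderline and fails. The fix is to union bound only over the two sets actually forced to be empty of edges, one of size $\asymp y_0$ and one of size $\asymp s_0$. Choosing these sizes so that the entropy $(s_0+y_0)h/2$ beats $p y_0 s_0/4=hs_0$ is where the constants $4$ and $100$ in \eqref{s_0y_0def} enter.
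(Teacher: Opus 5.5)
Your argument is correct, but it takes a different route from the paper's.

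\textbf{The paper's route.} The paper takes a union bound over the pair $(X,Y)$, with entropy $\binom{n}{s_0}\binom{n-s_0}{s_0}\le e^{2s_0(h+O(1))}$. It then sums over the block size $s$, the split $a$, and the choice of $X_1,Y_1$; Vandermonde collapses this to $\binom{2s_0}{s}=e^{O(s_0)}$. It uses \emph{all} missing crossing edges, $C(s,a)\ge s_0s-s^2/2$, and splits the range of $s$ into a geometric lower half dominated by $s=y_0$ and a negligible upper half.

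\textbf{Your route.} You observe that any bad clean splitting forces a single edgeless pair of fixed shape: $\lceil y_0/2\rceil\times s_0/2$ when $k\le s_0/2$, and $s_0/8\times s_0/8$ otherwise. You then take a union bound only over such pairs. This way you never pay entropy for $(X,Y)$ or for the splitting itself. The price is that you use only one crossing pair, so your probability exponent is about $hs_0$ instead of about $4hs_0$. Your entropy, however, drops to about $\tfrac{s_0+y_0}{2}h$.

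\textbf{What each buys.} Your version is shorter and avoids the summation over $(s,a)$. It also makes clear that the proposition is really a ``no large edgeless pair'' property. Indeed, your Case~2 already follows from Lemma~\ref{lem:edge-y0}, since $s_0/8>y_0$. The paper's version uses a sharper per-configuration estimate but must pay for choosing $(X,Y)$.

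\textbf{Your motivating remark is wrong.} The naive union bound is not borderline. At the worst block size $s=y_0$ the probability exponent is $p(s_0y_0-y_0^2/2)=398hy_0\approx 3.98\,hs_0$. That is roughly twice the entropy $2hs_0$, which is why the paper's direct count succeeds with margin $-198hy_0$.

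\textbf{Your caveat about bounded $s_0$ is unnecessary.} Since $ne^{-h}=h/p\ge h\to\infty$, you automatically have $s_0\ge 400h\to\infty$.

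\textbf{A small imprecision.} The cost bound $\exp\bigl(t(h+O(1))\bigr)$ for choosing a $t$-set holds for $t=\Theta(ne^{-h})$, not for every $t\le s_0$. This is harmless, because you only apply it at $t\in\{y_0/2,\,s_0/2,\,s_0/8\}$.
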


\begin{proof}
We prove (a), and the proof of (b) is essentially the same. Fix any disjoint \(X,Y\subseteq[n]\) with \(|X|=|Y|=s_0\).
Consider a clean splitting, if one exists, whose smaller block has size
$s\in[y_0,s_0]$. After interchanging the indices $1$ and $2$ if necessary,
we may assume that the smaller block is $X_1\cup Y_1$.
Let \(|X_1|=a\) and \(|Y_1|=s-a\)
(so \(|X_2|=s_0-a\), \(|Y_2|=s_0-(s-a)\)).
The absent crossing edges are those between \(X_1\)–\(Y_2\) and \(X_2\)–\(Y_1\), hence
\begin{equation}
C(s,a)=|X_1||Y_2|+|X_2||Y_1|=a(s_0-s+a)+(s_0-a)(s-a)=2a^2-2sa+s_0 s.
\end{equation}
Thus, an upper bound on the failure probability is
\begin{equation}\label{Failure_bound}
    {n\choose s_0} {n-s_0\choose s_0} \sum_{s=y_0}^{s_0} \sum_{a=0}^s {s_0\choose a} {s_0\choose s-a} (1-p)^{C(s,a)}.
\end{equation}
Our goal in the remaining proof is to organize and estimate the above sum.

Let us start with the quadratic function \(C(s,a)\). Note that $C(s,a)$
attains its minimum over \(a \in [0,s]\) at \(a = s/2\), and we write 
\begin{equation}
\min_{0 \le a \le s} C(s,a) \geq s_0 s - \frac{s^2}{2}.
\end{equation}
Using the bound \((1-p)^m\le \exp({-pm})\), we have then $(1-p)^{C(s,a)}\leq \exp\Big(-p (s_0s-s^2/2)\Big)$.

The following Vandermonde’s identity will be applied to the inner sum over $a$: 
\begin{equation}
\sum_{a=0}^{s}\binom{s_0}{a}\binom{s_0}{s-a}=\binom{2s_0}{s}.
\end{equation}

Combining the estimates so far, we obtain the following estimate for any $s$ fixed, 
\begin{equation}
\sum_{a=0}^{s}\binom{s_0}{a}\binom{s_0}{s-a}(1-p)^{C(s,a)}
\;\le\;
\binom{2s_0}{s}\exp\!\Big(-p\Big(s_0 s-\frac{s^{2}}{2}\Big)\Big).
\end{equation}

Next, since \(s_0=100y_0\), 
for 
any 
\(s\in[y_0,s_0]\), 
we have
${2s_0 \choose s} 
\ \le\
\exp\!\Big(s\log(\frac{e\cdot 2s_0}{s})\Big)
\ =\
\exp\!\big( O(s_0)
\big).$ 

Then let us write
$T_s=\exp\!\Big(-p(s_0 s-\frac{s^2}{2})\Big)$, and split the range 
\([[y_0,s_0]=
[y_0,\lfloor s_0/2\rfloor]
\cup
[\lfloor s_0/2\rfloor+1,s_0]\).

\noindent \textbf{\boldmath Lower half: $s\in [y_0,\lfloor s_0/2\rfloor]$.}
In this range, we have
\begin{equation}
\frac{T_{s+1}}{T_s}
=\exp\!\Big(-p\big(s_0-s-\tfrac12\big)\Big)
\le \exp\!\Big(-p\big(\tfrac{s_0}{2}-\tfrac12\big)\Big)
\le \tfrac12.
\end{equation}
This ratio means 
$\sum T_s$ in this range can be bounded by a geometric series, which in turn is  essentially 
bounded by its first term. More precisely, 
\begin{equation}
\sum_{s=y_0}^{\lfloor s_0/2\rfloor} {2s_0\choose s}
T_s
\le \exp(O(s_0))   T_{y_0} 
=\exp(O(s_0) -398\, h \, y_0).
\end{equation}

\noindent\textbf{\boldmath Upper half: $s\in[\lfloor s_0/2\rfloor+1,s_0]$}. Note that 
the function 
\(s \mapsto p(s_0 s-\tfrac{s^2}{2})\) increases in \(s\) in this range. Therefore, 
each term $T_s$ in this range is bounded by $T_{\lfloor s_0/2\rfloor}=\exp (-150 \, h\, s_0 )$. 
Therefore, 
\begin{equation}
\sum_{s=\lfloor s_0/2\rfloor+1}^{s_0} 
\binom{2s_0}{s}T_s
\le \lceil \tfrac{s_0}{2}\rceil\, 
\exp 
\Big( O(s_0) \Big) \, 
T_{\,\lfloor s_0/2\rfloor}
=
\exp \Big(O(\log (s_0)) + O(s_0)  - 150\,h\,s_0\Big).
\end{equation}
Since \(s_0=100\,y_0\), the exponent equals
\(-15000\,h\,y_0+100(\ln 4)\,y_0+O(\log y_0)\),
which is far smaller than that of the lower-half contribution 
\(\exp(-398\,h\,y_0)\).  
Thus the upper-half sum is negligible.

Finally, note that 
the number of ordered disjoint pairs \((X,Y)\) with \(|X|=|Y|=s_0\) satisfies
\begin{equation}
\binom{n}{s_0}\binom{n-s_0}{s_0}
\ \le\ \exp\!\Big(2s_0(h+O(1))\Big).
\end{equation}

Therefore, noting that 
$2\, s_0 \, h - 
398\, h\, y_0
=
-198 \, h\, y_0
\to
-\infty$ since $h\to \infty$, 
the summation in (\ref{Failure_bound})
is bounded by 
\begin{equation}
\exp\!\Big(2s_0(h+O(1))\Big)\cdot  
\exp\! \Big(O(s_0) -398 \,h\,y_0 \Big)
\ = \ o(1).
\end{equation}
The proof is completed. 
\end{proof}

\begin{lemma}\label{lem:edge-y0}
Suppose \(np=f=he^{h}=\omega(1)\), and set \(y_{0}=4n e^{-h}\) as in~\eqref{s_0y_0def}.  
Then, with high probability, the following holds.  
\begin{enumerate}
   \item For \(G\sim G(n,p)\), every two disjoint vertex sets \(X,Y\subseteq V(G)\) with \(|X|=|Y|=y_0\) contain at least one edge between them.  
   \item For \(G\sim G(n,n,p)\) with vertex bipartition \(A\cup B\), every \(X\subseteq A\) and \(Y\subseteq B\) with \(|X|=|Y|=y_0\) contain at least one edge between them.  
\end{enumerate}
\end{lemma}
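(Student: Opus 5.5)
The plan is a direct first-moment (union bound) argument. Fix disjoint sets $X,Y$ with $|X|=|Y|=y_0$; in both models the $y_0^2$ potential edges between $X$ and $Y$ are independent, so the probability that none of them is present is
\begin{equation}
(1-p)^{y_0^2}\le \exp(-p y_0^2).
\end{equation}
The number of choices of the ordered pair $(X,Y)$ is at most $\binom{n}{y_0}^2$ in both cases (in the bipartite model the count is $\binom{n}{y_0}^2$ exactly, and in $G(n,p)$ it is $\binom{n}{y_0}\binom{n-y_0}{y_0}\le\binom{n}{y_0}^2$). So it suffices to show that
\begin{equation}
\binom{n}{y_0}^2\exp(-p y_0^2)=o(1).
\end{equation}

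Both factors can be evaluated explicitly. Since $y_0=4ne^{-h}$, we have $\binom{n}{y_0}\le (en/y_0)^{y_0}=\exp\bigl(y_0(h+1-\log 4)\bigr)\le \exp\bigl(y_0(h+1)\bigr)$, so the counting factor is at most $\exp(2y_0(h+1))$. For the edge factor, $p y_0=\frac{he^h}{n}\cdot 4ne^{-h}=4h$, so $p y_0^2=4h y_0$. The total bound is therefore
\begin{equation}
\exp\bigl(2y_0(h+1)-4hy_0\bigr)=\exp\bigl(-2y_0(h-1)\bigr).
\end{equation}

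The only point needing care is that this exponent actually tends to $-\infty$. Here $h\to\infty$ because $he^h=np\to\infty$, and $y_0\ge 1$ whenever the statement is non-vacuous, i.e.\ when sets of size $y_0$ exist. Hence $2y_0(h-1)\ge 2(h-1)\to\infty$, and the failure probability is $o(1)$. If $y_0<1$, the statement is trivial after rounding $y_0$ up to $1$, but then it would say that $G$ is complete, which is false. So I would note that the relevant regime is $y_0\ge 1$, or equivalently interpret $y_0$ with a ceiling, in which case $p\lceil y_0\rceil^2\ge 4h\lceil y_0\rceil$ still holds and the same computation goes through. I do not expect any genuine obstacle; the constant $4$ in $y_0$ is exactly what makes $p y_0^2$ beat the entropy term $2y_0 h$ by a factor of two.
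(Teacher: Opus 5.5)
Your proof is correct and is essentially the paper's argument: a union bound over at most $\binom{n}{y_0}^2$ pairs, with $(1-p)^{y_0^2}\le \exp(-py_0^2)$ and $py_0^2=4hy_0=16hne^{-h}$ beating $\binom{n}{y_0}\le (e^{h+1}/4)^{y_0}$. Your discussion of the case $y_0<1$ is unnecessary: $e^{-h}=h/(np)$ gives $y_0=4h/p\ge 4h\to\infty$, so $y_0\ge 1$ holds automatically for large $n$.
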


\begin{proof}
The same proof works for both statements. Let us only give the proof of (b). Fix \(X\subseteq A\), \(Y\subseteq B\) with \(|X|=|Y|=y_{0}\).
The probability that \(G[X,Y]\) has no edge is \((1-p)^{y_{0}^{2}}\le \exp(-p y_{0}^{2})\).
Since \(p=he^{h}/n\) and \(y_{0}=4n e^{-h}\), we have \(p y_{0}^{2}=16h n e^{-h}\),
so for fixed \(X,Y\),
\(\mathbb{P}(E(X,Y)=\emptyset)\le \exp(-16h n e^{-h})\).

Moreover, \(\binom{n}{y_{0}}\le (e n / y_{0})^{y_{0}} = (e^{h+1}/4)^{y_{0}}\),
so \(\binom{n}{y_{0}}^{2} \le \exp(8h n e^{-h})\) for large \(h\).
By the union bound,
\(\mathbb P(\exists\,X,Y:\,E(X,Y)=\emptyset)\le 
\exp(-8h n e^{-h})=o(1)\),
since \(h n e^{-h}\to\infty\) under \(np=\omega(1)\).
Thus, with high probability, every pair of subsets of size \(y_{0}\)
admits at least one edge between them. 
\end{proof}

\begin{coro}\label{cor_on_large_component} 
Suppose $np=f=
he^h=\omega(1)$, 
and set $y_0,s_0$ as in (\ref{s_0y_0def}).
Then the following holds with high probability.
\begin{enumerate}[(a)]
    \item In the Erd\H{o}s--R\'enyi model $G\sim G(n,p)$, for every disjoint vertex subsets
    $X, Y$ with $|X|\geq s_0, |Y|\ge s_0$, the induced bipartite graph $G[X,Y]$ contains a connected component covering all but at most $3y_0$ vertices of $X\cup Y$.
    
   Moreover, for any two disjoint vertex subsets $A$ and $B$ with
\(|A|\geq 4s_0, |B|\ge 4s_0\), every red--blue coloring of the edges of
\(G[A,B]\) contains a monochromatic connected component \(C\) such that
\begin{equation}
|C\cap A|\ge \frac{|A|}{2}-5y_0,
\qquad
|C\cap B|\ge \frac{|B|}{2}-5y_0.
\end{equation}

\item In the bipartite 
model \(G\sim G(n,n,p)\) with fixed bipartition \(A\cup B\) (so \(|A|=|B|=n\)),
for every \(X\subseteq A\) and \(Y\subseteq B\) with \(|X|\ge s_0, |Y|\ge s_0\), the induced bipartite graph \(G[X,Y]\)
contains a connected component covering all but at most \(3y_0\) vertices of \(X\cup Y\).

Moreover, for every red--blue coloring of the edges of \(G[A,B]\), there exists a monochromatic connected component \(C\) such that
\begin{equation}
|C \cap A| \;\ge\; \frac{n}{2} - 5y_0,
\qquad
|C \cap B| \;\ge\; \frac{n}{2} - 5y_0.
\end{equation}
\end{enumerate}
\end{coro}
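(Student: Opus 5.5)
The first statement in each part will come from Proposition~\ref{thm:no-split-h} and Lemma~\ref{lem:edge-y0}. The ``moreover'' statements will then be derived from the first statements by a case analysis on the red components. I work on the intersection of the high-probability events of Proposition~\ref{thm:no-split-h} and Lemma~\ref{lem:edge-y0}; everything after that is deterministic. I describe (a); part (b) is identical with the bipartite versions of the same results.

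\emph{Giant component of $G[X,Y]$.} Let $X,Y$ be disjoint with $|X|,|Y|\ge s_0$, and let $\mathcal K$ be the family of components of $G[X,Y]$. For any subfamily $\mathcal P\subseteq\mathcal K$, write $P=\bigcup\mathcal P$ and $Q=(X\cup Y)\setminus P$. There are no edges between $P\cap X$ and $Q\cap Y$, nor between $P\cap Y$ and $Q\cap X$. So by Lemma~\ref{lem:edge-y0}, one of $|P\cap X|,|Q\cap Y|$ is $<y_0$, and one of $|P\cap Y|,|Q\cap X|$ is $<y_0$.

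Suppose no component covers all but $3y_0$ vertices. I add components to $\mathcal P$ one at a time, in a suitable order, until $|P|$ first exceeds $y_0$. The aim is to produce $P$ with $|P|>y_0$ and $|Q|>y_0$ that \emph{both} meet $X$ and $Y$ substantially. The dichotomies above should then force one side to be tiny on both $X$ and $Y$; I expect this to bound $|Q|$ (or $|P|$) by roughly $2y_0$ plus one component's leftover, which gives the $3y_0$.

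If the dichotomies do not directly yield a contradiction, I restrict to $X'\subseteq X$, $Y'\subseteq Y$ of size exactly $s_0$. These are chosen to contain more than $y_0/2$ vertices of each of $P\cap X$, $P\cap Y$, $Q\cap X$, $Q\cap Y$ where available. The restricted partition $(P\cap X',Q\cap X',P\cap Y',Q\cap Y')$ is then a clean splitting of $G[X',Y']$ whose smaller block exceeds $y_0$, contradicting Proposition~\ref{thm:no-split-h}(a). The bookkeeping of which pieces can be small is routine but fiddly.

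\emph{Monochromatic component in $G[A,B]$.} Let $K$ be a largest red component. If $|K\cap A|\ge|A|/2-5y_0$ and $|K\cap B|\ge|B|/2-5y_0$, we are done. Otherwise I group the red components into two families, giving $A=A_1\sqcup A_2$ and $B=B_1\sqcup B_2$ with no red edges in $G[A_1,B_2]\cup G[A_2,B_1]$. Hence all edges of $G[A_1,B_2]$ and of $G[A_2,B_1]$ are blue.

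I choose the grouping so that $|A_1|$ is as close to $|A|/2$ as the component sizes allow. The key point is that a single red component with more than half of $A$ but less than half of $B$ still lets me place it in one family. Whenever $|A_1|,|B_2|\ge s_0$, the first statement gives a blue component covering all but $3y_0$ of $A_1\cup B_2$; similarly for $A_2\cup B_1$.

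\emph{Cases.} If $|A_1|\ge|A|/2$ and $|B_2|\ge|B|/2$ (up to $2y_0$), the blue component in $A_1\cup B_2$ works. In the crossed case $|A_1|,|B_1|\ge$ half, the two blue giants cover $A_1\cup B_2$ and $A_2\cup B_1$. If both of these pieces are large, they are joined by a blue edge: otherwise all edges of $G[A_1,B_1]$ would be red, and then the first statement applied to $G[A_1,B_1]$ gives a red component containing almost all of $A_1\cup B_1$, which is large enough on both sides. If one piece has size $<s_0$, Lemma~\ref{lem:edge-y0} shows it is essentially absorbed, and this costs the additional $2y_0$ in the $5y_0$ bound.

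I expect this crossed case, together with choosing the grouping so that the sizes land within $O(y_0)$ of the halves, to be the main obstacle. I would first try a potential-function choice: minimize $\bigl||A_1|-|A|/2\bigr|+\bigl||B_1|-|B|/2\bigr|$ over all groupings, then check each case against the first statement.
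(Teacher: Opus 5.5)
There is a genuine gap in the ``moreover'' part, at the point you flag as the main obstacle. Your straight case is fine. So is the crossed case when $|A_2|,|B_2|\ge s_0$, up to constants: losing $3y_0$ in building $C_1$ and another $3y_0$ in $G[C_1\cap A_1,C_2\cap B_1]$ gives $|A|/2-6y_0$, not $|A|/2-5y_0$.

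The crossed case with $|A_2|<s_0$ or $|B_2|<s_0$ is not handled, and Lemma~\ref{lem:edge-y0} cannot ``absorb'' the small piece. If $|B_2|<s_0$, every component of the all-blue graph $G[A_1,B_2]$ meets $B$ in fewer than $s_0$ vertices. Linking it to the blue structure on $A_2\cup B_1$ needs blue edges in $G[A_1,B_1]$ or $G[A_2,B_2]$, and the grouping does not control their colours. If both $|A_2|,|B_2|<s_0$ (for instance, one family contains every red component), you have no blue information at all.

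So everything rests on choosing the grouping, and your target of landing within $O(y_0)$ of the halves is unattainable in general. If the red components are three sets each meeting $A$ and $B$ in about a third, every grouping is off by about $|A|/6$ and $|B|/6$. Minimising your potential might avoid the bad configurations, but nothing in the proposal shows it.

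The missing idea is the maximality step the paper uses. Let $\hat C$ be a \emph{maximal} blue component with $|\hat C\cap B|\ge |B|/2-2y_0$; in your setting this is the blue component containing the giant of $G[A_2,B_1]$ when $|B_2|<s_0\le |A_2|$. Then either $|\hat C\cap A|\ge |A|/2-5y_0$, or every edge of $G[A\setminus\hat C,\hat C\cap B]$ is red, and the first statement applied there gives the required red component. The paper avoids groupings of red components altogether. A greedy claim yields some monochromatic component with at least $s_0$ vertices on one side, and two rounds of this maximality step turn it into a component meeting both $A$ and $B$ in about half.

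For the first statement your route differs from the paper's. The paper restricts to $s_0$-subsets $X_0,Y_0$, uses Proposition~\ref{thm:no-split-h} to find a component of order at least $2s_0-y_0$ in $G[X_0,Y_0]$, and then extends it with Lemma~\ref{lem:edge-y0}. Your version can be closed, but not with threshold $y_0$. A union $P$ with $|P\cap X|,|P\cap Y|<y_0$ and $|P|>y_0$ is consistent with both of your dichotomies, which is why you are pushed to the fallback.

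What you are missing is that the two mixed cases ($|P\cap X|,|Q\cap X|<y_0$, or $|P\cap Y|,|Q\cap Y|<y_0$) are impossible, because $|X|,|Y|\ge s_0>2y_0$. So every union $P$ of components has $|P|<2y_0$ or $|Q|<2y_0$. Hence each component has fewer than $2y_0$ or more than $|X\cup Y|-2y_0$ vertices. If all are small, adding them until $|P|\ge 2y_0$ gives $2y_0\le |P|<4y_0$ and $|Q|>2y_0$, a contradiction.

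This proves the first statement with $2y_0$ in place of $3y_0$ from Lemma~\ref{lem:edge-y0} alone, without Proposition~\ref{thm:no-split-h}. The sharper bound also fixes the constant in your crossed case.
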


\begin{proof} We will only prove  (a), since proof of (b) is the essentially the same. 
Let us prove the first statement. 
Choose \(X_0\subseteq X\) and \(Y_0\subseteq Y\) with
\(|X_0|=|Y_0|=s_0\), and consider the bipartite graph \(G[X_0,Y_0]\).
We claim that \(G[X_0,Y_0]\) contains a connected component of size at
least \(2s_0-y_0\).

Suppose otherwise that every connected component has size less than
\(2s_0-y_0\). If some connected component \(C\) has size greater than
\(y_0\), then
\begin{equation}
\bigl|(X_0\cup Y_0)\setminus C\bigr|
=2s_0-|C|>y_0.
\end{equation}
Since there are no edges between \(C\) and its complement, these two
sets form a clean splitting whose smaller block has size greater than
\(y_0\), contradicting Proposition~\ref{thm:no-split-h}.

Otherwise, every connected component has size at most \(y_0\).
By taking connected components one at a time, we can find a union \(U\)
of connected components such that
\begin{equation}
y_0<|U|\leq 2y_0.
\end{equation}
Since
\begin{equation}
\bigl|(X_0\cup Y_0)\setminus U\bigr|
\geq 2s_0-2y_0>y_0,
\end{equation}
the sets \(U\) and \((X_0\cup Y_0)\setminus U\) again form a clean
splitting whose smaller block has size greater than \(y_0\), contradicting
Proposition~\ref{thm:no-split-h}. Therefore, \(G[X_0,Y_0]\) contains a
connected component \(C\) satisfying
\begin{equation}
|C|\geq 2s_0-y_0.
\end{equation}

It follows that
\begin{equation}
|X_0\cap C|\geq s_0-y_0,
\qquad
|Y_0\cap C|\geq s_0-y_0.
\end{equation}
Lemma~\ref{lem:edge-y0} implies that there cannot exist \(y_0\) vertices
in \(X\setminus X_0\) with no neighbor in \(Y_0\cap C\). Hence at most
\(y_0\) vertices of \(X\setminus X_0\) fail to send an edge to
\(Y_0\cap C\), while every other vertex of \(X\setminus X_0\) lies in
the same connected component as \(C\).

By symmetry, at most \(y_0\) vertices of \(Y\setminus Y_0\) have no
neighbor in \(X_0\cap C\), while every other vertex of
\(Y\setminus Y_0\) lies in the same connected component as \(C\).
Combining these bounds with
\begin{equation}
\bigl|(X_0\cup Y_0)\setminus C\bigr|\leq y_0,
\end{equation}
we conclude that the connected component of \(G[X,Y]\) containing \(C\)
covers all but at most \(3y_0\) vertices of \(X\cup Y\).

Moreover, fix two disjoint vertex sets \(A,B\subseteq V(G)\) with
\(|A|\geq |B|\geq 4s_0\), and consider an arbitrary red--blue edge
coloring of the bipartite graph \(G[A,B]\). We first have the following claim.\\

\noindent {\textbf{Claim.} In the bipartite graph $G[A,B]$,
there is a monochromatic component $C$, such that 
$\max \{ |C\cap A|,|C\cap B|\} \geq s_0$.
}
\begin{proof}[Proof of Claim]
Suppose for contradiction that every monochromatic component $C$ satisfies that 
$|C\cap A|<s_0$ and $|C\cap B|<s_0$. 
In particular, every monochromatic component has order 
strictly smaller than $2s_0$.

Let $C_1$ be the largest monochromatic (say, blue) connected component 
in $G[A,B]$.  
Define $C_2$ to be the largest blue connected component in the bipartite graph on the vertex set $V\setminus C_1$. 
Continue this process greedily: for each $i\ge2$, let $C_i$ be the largest blue connected component 
in the remaining bipartite graph induced by $V\setminus(C_1\cup\cdots\cup C_{i-1})$. 
Stop the process 
as soon as there are no blue edges left in the remaining bipartite graph, 
or when the union 
$U=C_1\cup\cdots\cup C_k$ 
first satisfies 
$|U\cap A|\ge s_0$ or $|U \cap B|\ge s_0$.

If the process stops because there are no blue edges left before the second stopping condition is met, then
the bipartite subgraph induced by $V\setminus U$ 
meets both parts $A$ and $B$ in at least $3s_0$ vertices, 
and in the corresponding induced bipartite graph all present edges are red. 
By 
the first statement, 
we know 
this bipartite subgraph therefore contains a red connected component covering at least 
$6s_0-3y_0>2s_0>|C_1|$, contradicting the choice of $C_1$ as a largest monochromatic component.

Therefore, the process must stop the first time the union $U$ intersects one of the parts in at least $s_0$ vertices. 
Assume $|U\cap A|\ge s_0$ (the other case is analogous). 
By the minimality of $k$, before adding $C_k$ the union
$C_1\cup\cdots\cup C_{k-1}$ meets each of $A$ and $B$ in fewer than
$s_0$ vertices. Since $|C_k\cap B|<s_0$, it follows that
\begin{equation}
|U\cap B|<2s_0.
\end{equation}
Consequently,
\begin{equation}
|B\setminus U|> |B|-2s_0\ge 2s_0.
\end{equation}
Consider the bipartite graph 
$G[U\cap A,\,B\setminus U]$. 
By the first statement, 
it contains a red connected component of order at least 
$3s_0-3y_0> 2s_0 > |C_1|$, again contradicting the maximality of $C_1$.
\end{proof}

Thus we can take a  maximal monochromatic (say blue) component $C_\ast$ which intersects one part with at least $s_0$ vertices. 
Let us
assume that
$|C_\ast\cap A|\ge s_0$ (the other case can be similarly dealt with).
If $|C_\ast\cap B|\geq \frac{|B|}{2}-\frac{3y_0}{2}$, then we have a blue component which intersects $B$ with 
at least $\frac{|B|}{2}-\frac{3y_0}{2}$ vertices. 

If 
$|C_\ast \cap B|< \frac{|B|}{2}-\frac{3y_0}{2}$, 
then $|B\setminus C_\ast|\ge 
\frac{|B|}{2}+\frac{3y_0}{2}\geq s_0$. 
Applying 
the first statement to the bipartite graph 
$G[C_\ast \cap A, B\backslash  C_\ast]$, 
we obtain a red component $R$, which intersects $B$ with at least 
$\frac{|B|}{2}-\frac {3y_0}{2}$ vertices. Moreover, $|R\cap A|\geq |C_\ast\cap A|-3y_0\geq s_0-3y_0.$

In either case, extend the monochromatic component under consideration to the
monochromatic connected component of $G[A,B]$ containing it,
and exchange the names of the colors if necessary. Denote the resulting
component by $C_\ast$. Thus, in either case, we may assume that $C_\ast$ is a
maximal blue component satisfying
\begin{equation}
|C_\ast\cap B|\geq \frac{|B|}{2}-\frac{3y_0}{2},
\qquad
|C_\ast\cap A|\geq s_0-3y_0.
\end{equation}
Therefore, all present edges in
$G[A\setminus C_\ast,C_\ast\cap B]$ have color red. If 
$|C_\ast\cap A| \geq \frac{
|A|}{2}-\frac{3y_0}{2}$ then we are done. 
If $|C_\ast\cap A|
< \frac{
|A|}{2}-\frac{3y_0}{2}$, then in $G[A\backslash C_\ast, C_\ast\cap B]$, by the first statement, we can find a red component $R$ such that 
$|R\cap A| \geq 
\frac{|A|}{2}+\frac{3y_0}{2}-3y_0
\geq \frac{|A|}{2}-2y_0$ and $|R\cap B|\geq 
\frac{|B|}{2}-\frac{3y_0}{2}-3y_0\geq \frac{|B|}{2}- 5y_0$, which is a monochromatic component as required. If $|C_\ast\cap A|
\ge \frac{
|A|}{2}-\frac{3y_0}{2}$, then $C_\ast$ is the monochromatic component as required. 
\end{proof}

\begin{proof}[Proof of (i) in Theorem~\ref{thm:mono-sharp}]
Let $G\sim G(n,p)$ with $np=f=he^{h}$, and define $y_0,s_0$ as in~\eqref{s_0y_0def}, and fix an arbitrary red--blue edge coloring. 
Let $A$ be a largest monochromatic connected component; without loss of generality $A$ is blue.\\

\noindent \smallskip
\textbf{Case 1: $|A|\le s_0$.}
Greedily take disjoint blue components $C_1,C_2,\dots$ until either there are no new blue components and we obtain $W$ as the union of all blue components, or the union $W=\bigcup_{i=1}^k C_i$ satisfies
$s_0\le |W|\le 2s_0$ for a minimal $k$ (no more than $2s_0$ is possible since each added component has size at most $s_0$). 
In the first case, we have $|W|<s_0$, and all present edges inside
$G[V(G)\setminus W]$ are red. Since $s_0=o(n)$, for all sufficiently large $n$ we have
$|V(G)\setminus W|\geq 2s_0$. Partition $V(G)\setminus W$ into two
sets of size at least $s_0$. Applying
Corollary~\ref{cor_on_large_component}\,(a) to the induced bipartite
graph between these two sets yields a red connected component of order
at least
\begin{equation}
n-|W|-3y_0>n-s_0-3y_0.
\end{equation}

In the second case, let $W^c=V\setminus W$. 
By the definition of the monochromatic components forming $W$, every
present edge in $G[W,W^c]$ has the red color.
Applying
Corollary~\ref{cor_on_large_component}\,(a) to the bipartite graph $G[W,W^{c}]$ yields a connected red component of order at least $n-3y_0$. This proves the claim in this case.\\

\noindent \smallskip
\textbf{Case 2: $|A|> s_0$.}
Set $A^c:=V(G)\setminus A$. If $|A^c|<s_0$, then $|A|>n-s_0$ and we are done.
Otherwise $|A^c|\ge s_0$. Again, every present edge in $G[A,A^c]$ is red, so
Corollary~\ref{cor_on_large_component} (a) applied to the bipartite graph $G[A,A^c]$ gives a red connected component of order at least $n-3y_0$.

\smallskip
In both cases there is a monochromatic connected component of order at least $n-s_0-3y_0$, proving (i).
\end{proof}

\begin{proof}[Proof of (i) of Theorem~\ref{thm:bipartite-mono}]
Let $G\sim G(n,n,p)$ with $f=np=h e^{h}$, and set $y_0,s_0$ as in~(\ref{s_0y_0def}). 
Then fix an arbitrary red-blue coloring. Our goal is to prove that two connected components cover all but $2s_0$ vertices. 

By Corollary~\ref{cor_on_large_component} (b), we can choose a monochromatic connected component $C_1$. Without loss of generality, assume that $C_1$ is blue. We have:
\begin{equation}\label{eq:balance-C1}
|C_1\cap A| \geq  \frac n2-5 y_0, \, \, |C_1\cap B|\ \ge\ \frac{n}{2}-5y_0.
\end{equation}

Now we define 
\begin{equation}
A_1 := A \cap C_1, \qquad 
X := A \setminus A_1, \qquad
B_1 := B \cap C_1, \qquad
Y := B \setminus B_1.
\end{equation}

\noindent{\textbf{Case 1. Both $|X|<s_0$ and $|Y|<s_0$.}}
In this case, the single blue component $C_1$ already covers at least $2n-2s_0$ vertices, 
so the desired conclusion holds.\\
\noindent{\textbf{Case 2. $|X|\ge s_0$ and $|Y|<s_0$ (or, by symmetry, $|X|<s_0$ and $|Y|\ge s_0$).}}
All present edge
in the bipartite graph 
$G[X,B_1]$ must be red. 
Applying Corollary~\ref{cor_on_large_component} (b) 
to this bipartite graph yields a red connected component $R$ 
that covers all but at most $3y_0$ vertices of $X\cup B_1$. 
Hence the two monochromatic components $C_1$ and $R$ together cover all but at most $s_0+3y_0$ vertices of $G$, 
and the desired conclusion follows.\\
\noindent{\textbf{Case 3. Both $|X|\ge s_0$ and $|Y|\ge s_0$.}} Again we consider the bipartite graph induced on the two parts $X$ and $B_1$. Applying Corollary~\ref{cor_on_large_component} (b)
to this bipartite graph yields a red connected component $R$ 
that covers all but at most $3y_0$ vertices of $X\cup B_1$. Similarly, for the bipartite graph $G[A_1,Y]$,
we apply  Corollary~\ref{cor_on_large_component} (b)
again to 
obtain 
a red connected component $R'$ 
that covers all but at most $3y_0$ vertices of $Y\cup A_1$. Therefore, the two (red) monochromatic connected components $R$ and $R'$ together cover all but at most $6y_0$ vertices of $G$, and the desired conclusion follows. 
\end{proof}

\section{Proof of part \textnormal{(i)}
 of Theorem~\ref{thm:three-color}}\label{ioftheorem3}
This section is devoted to the proof of Theorem~\ref{thm:three-color}, part (i).  
Let \(G \sim G(n,p)\) with \(np = \omega(1)\) and $p=o(1)$. We also write \(f = h e^{h}\), and define
\begin{equation}
y_0 = 4n e^{-h}, \qquad s_0 = 100 y_0.
\end{equation}
Fix an absolute constant \(C > 0\), whose precise value will be determined only at the end of the proof.

\medskip
\noindent\textbf{Standing assumption.}
For the sake of contradiction, assume that there exists a 3-edge-coloring of \(E(G)\), with colors red, blue, and green, such that every monochromatic connected component has order strictly smaller than
\(\frac{n}{2}-\frac{C}{p}\).

\medskip
Under this assumption, we shall gradually construct a sequence of large monochromatic subgraphs.
At each stage, the construction will take place within a bipartite subgraph of \(G\),
and we will repeatedly apply Corollary~\ref{cor_on_large_component}
to find new large components while keeping careful track of the color constraints.
This process will eventually lead to a global three–color splitting structure,
which in turn contradicts the standing assumption and completes the proof of
Theorem~\ref{thm:three-color}\,(i). Let us state this structure clearly as the following lemma, which is a sort of a stability result.

\begin{lemma}[Global three–color splitting]\label{lem:global-splitting} Under the standing assumption, 
with high probability, there exist pairwise disjoint vertex subsets
\begin{equation}
A_1, A_2, B_1, B_2 \subseteq V(G), 
\end{equation}
where \(|A_i|, |B_i| = \tfrac{n}{4} + o(n)\) for each \(i = 1,2\), with the following coloring restrictions (after possibly
re-naming the colors):
\begin{enumerate}
\item Set $A=A_1\cup A_2,B=B_1\cup B_2$.
Then \(A\) lies entirely inside one green connected component, while \(B\) lies
entirely inside a \emph{different} green connected component.
\item \(A_{1}\cup B_{1}\) lies inside one blue connected component, and
\(A_{2}\cup B_{2}\) lies inside a \emph{different} blue connected component;
\item \(A_{1}\cup B_{2}\) lies inside one red connected component, and
\(A_{2}\cup B_{1}\) lies inside a \emph{different} red connected component.
\end{enumerate}
\end{lemma}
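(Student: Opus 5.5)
We will build the structure in three rounds, each an application of Corollary~\ref{cor_on_large_component}(a) to a bipartite graph whose edges have been forced into two colors by the maximality of components found earlier. Throughout, $y_0,s_0=o(n)$, and since $p=o(1)$ we have $1/p=o(n)$. Hence every error term $O(y_0)+O(1/p)$ is $o(n)$, and the standing assumption says every monochromatic component has order less than $n/2-C/p<n/2$.

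\textbf{Step 1: the green pair.} First we produce a large monochromatic component. A simple pigeonhole or greedy argument as in Case~1 of the proof of Theorem~\ref{thm:mono-sharp}(i) gives one of order $\Omega(n)$. Alternatively, split $V$ arbitrarily into two halves and apply the second part of Corollary~\ref{cor_on_large_component}(a) to the two colors that dominate there. Call this component green and denote it $\Gamma$. By the standing assumption $|V\setminus\Gamma|>n/2$.

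Every edge of $G[\Gamma,V\setminus\Gamma]$ is red or blue. Applying the second part of Corollary~\ref{cor_on_large_component}(a) to $G[\Gamma,V\setminus\Gamma]$, we get a red or blue component $D$ meeting each side in roughly half. Since $|D|<n/2$, both sides are squeezed: $|\Gamma|$ and $|V\setminus\Gamma|$ are each $n/2+o(n)$, and $D$ takes about a quarter of $n$ on each side.

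Next we show $V\setminus\Gamma$ is essentially one green component. If it were not, a clean-splitting argument would apply. Cover the vertices of $V\setminus\Gamma$ by green components. If none is nearly all of $V\setminus\Gamma$, group them into two sets $U_1,U_2$ of linear size. Then $G[\Gamma\cup U_1,U_2]$ is red/blue and large, and the first part of Corollary~\ref{cor_on_large_component}(a) combined with colour-counting yields a red/blue component of order above $n/2$, a contradiction.

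Set $A\approx\Gamma$ and $B\approx$ the green component on the other side. Vertices outside $A\cup B$ number $o(n)$; we discard them, which is allowed since the lemma only asks for parts of size $n/4+o(n)$.

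\textbf{Step 2: the blue/red quadrants.} Now $G[A,B]$ is entirely red/blue. By the second part of Corollary~\ref{cor_on_large_component}(a) there is, say, a blue component $C_1$ with $|C_1\cap A|,|C_1\cap B|\ge n/4-O(y_0)$. The standing assumption caps $|C_1|<n/2$, so these intersections are $n/4+o(n)$. Set $A_1=C_1\cap A$, $B_1=C_1\cap B$, $A_2=A\setminus A_1$, $B_2=B\setminus B_1$.

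By maximality of $C_1$, all edges of $G[A_1,B_2]$ and $G[A_2,B_1]$ are red. The first part of Corollary~\ref{cor_on_large_component}(a) gives red components $R_1\supseteq$ almost all of $A_1\cup B_2$ and $R_2\supseteq$ almost all of $A_2\cup B_1$. If $R_1=R_2$, that red component has order $\ge n-O(y_0)$, which is impossible; so they are different.

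Similarly, $R_1$ is maximal, so edges of $G[A_2,B_2]$ minus $O(y_0)$ vertices are blue. The same corollary gives a blue component $C_2$ covering almost all of $A_2\cup B_2$. It differs from $C_1$, since otherwise the union would exceed $n/2$.

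\textbf{Step 3: cleanup.} We trim the $O(y_0)$ exceptional vertices from each part so that containment is exact. This gives items (1)--(3) with $|A_i|,|B_i|=n/4+o(n)$.

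\textbf{Main obstacle.} The delicate step is Step~1: proving that the complement of the first green component is itself essentially a single green component, and that $|\Gamma|\approx n/2$. This requires a careful case analysis, in the spirit of the Claim in Corollary~\ref{cor_on_large_component}, of how green components can partition $V\setminus\Gamma$, using that red/blue bipartite pieces of linear size force components exceeding $n/2$.
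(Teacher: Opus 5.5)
Your Steps 2--3 essentially reproduce the paper's Step 4. Step 1, however, has a genuine gap exactly where you place the ``main obstacle'': you never prove that $V\setminus\Gamma$ is, up to $o(n)$ vertices, a single green component, and the argument you sketch for it does not work.

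\begin{itemize}
\item Applying Corollary~\ref{cor_on_large_component}(a) to the red/blue graph $G[\Gamma\cup U_1,U_2]$ only gives a component of order about $n/2-10y_0$. This is compatible with the standing assumption: since $y_0=4h/p\gg C/p$, no argument that loses $\Theta(y_0)$ can contradict it without a much larger excess.
\item The first part of the corollary needs a \emph{monochromatic} bipartite graph, and you have not exhibited one.
\item ``Colour-counting'' is not an argument. A contradiction along your lines is possible, but it requires running the full quadrant analysis of your Step~2 on $G[\Gamma,U_1\cup U_2]$, and then using Lemma~\ref{lem:edge-y0} to rule out edges between the relevant pieces of $U_1$ and $U_2$. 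That is precisely the case analysis that is missing.
\end{itemize}

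Step 1 also contains two smaller errors.
\begin{itemize}
\item The ``squeeze'' is a non sequitur. The bound $|D|\ge |\Gamma|/2+|V\setminus\Gamma|/2-10y_0=n/2-10y_0$ holds for every value of $|\Gamma|$. So $|D|<n/2$ only says that $D$ takes about half of each side; it does not give $|\Gamma|\approx n/2$.
\item The ``alternative'' of splitting $V$ into two halves misapplies the corollary, because that bipartite graph is $3$-coloured. You need a set $W$ that is a union of components of one colour, as in the paper's Step~1, so that $G[W,V\setminus W]$ misses that colour.
\end{itemize}

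The missing idea is to \emph{force} the third colour through maximality of components of the other two colours, rather than establishing it by contradiction. The paper builds green last. It takes a red component $C_{\mathrm{red}}$ of order $n/2-O(y_0)$ and a maximal blue component $D$ across $G[C_{\mathrm{red}},V\setminus C_{\mathrm{red}}]$. Then $G[D\setminus C_{\mathrm{red}},\,C_{\mathrm{red}}\setminus D]$ and $G[C_{\mathrm{red}}\cap D,\,V\setminus(C_{\mathrm{red}}\cup D)]$ contain no red and no blue edges. Hence both are entirely green, and the first part of the corollary produces both green components at once.

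Your green-first order can be repaired the same way.
\begin{enumerate}
\item Take $D$ maximal blue across $G[\Gamma,V\setminus\Gamma]$. The pairs $G[\Gamma\cap D,(V\setminus\Gamma)\setminus D]$ and $G[\Gamma\setminus D,(V\setminus\Gamma)\cap D]$ are then entirely red. This gives red components $R_1,R_2$, which are distinct since otherwise one red component would have order $n-O(y_0)$.
\item After discarding $O(y_0)$ exceptional vertices, consider $G[(V\setminus\Gamma)\cap D\cap R_2,\,((V\setminus\Gamma)\setminus D)\cap R_1]$. It has no blue edges (maximality of $D$) and no red edges ($R_1\neq R_2$), so it is green.
\item The corollary then gives a green component covering all but $O(y_0)$ vertices of $V\setminus\Gamma$.
\end{enumerate}
Only at this point does $|\Gamma|\approx n/2$ follow, and the quadrant structure you need for Step~2 is already in hand.
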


\begin{proof}
We will apply 
Corollary~\ref{cor_on_large_component}
several times below. 
For clarity of exposition, the argument is divided into several steps.\\

\noindent\textbf{Step 1.}
There exists a red connected component \(C_{\text{red}}\) with \(|C_{\text{red}}| = \tfrac{n}{2} - o(n)\).

\smallskip
Among the three colors, choose one (say blue)
which meets the maximum number of vertices.
By the pigeonhole principle, and since for our choice of \(p\) the random graph \(G(n,p)\) 
has a unique giant component of order \(n-o(n)\),
the blue color must cover at least \(n/3 - o(n)\) vertices in total.   
If some blue connected
component already has order at least \(4s_0\), take \(C_{\text{blue}}^1\) to be that component. Here, the superscript '1' indicates that this is the first blue component (union) we select; later we will identify additional blue components until we reach the one that satisfies our requirements. If all blue
components have order at most \(4s_0\), but since blue edge cover much more than \(4s_0\) vertices,
we may form \(C_{\text{blue}}^1\) as a union of blue
components, 
putting together blue components one by one until
$ 4s_0 \le |C_{\text{blue}}^1| \le 8s_0.$
By construction, there is no blue
edge between \(C_{\text{blue}}^1\) and \(V\setminus C_{\text{blue}}^1\), so the bipartite graph
\(G[C_{\text{blue}}^1, V\setminus C_{\text{blue}}^1]\) is colored only with red and green.
Applying Corollary~\ref{cor_on_large_component} (a), we conclude 
that this bipartite subgraph contains a monochromatic (say red)
component 
\(C_{\text{red}}\) such that
\begin{equation}
|C_{\text{red}}\cap C_{\text{blue}}^1| \ge \frac{|C_{\text{blue}}^1|}{2} - 5y_0, \qquad
|C_{\text{red}}\cap (V\setminus C_{\text{blue}}^1)| \ge \frac{|V\setminus C_{\text{blue}}^1|}{2} - 5y_0.
\end{equation}
Consequently, 
\begin{equation}\label{red_n/2}
|C_{\text{red}}| \ge \frac{n}{2} - 10y_0.
\end{equation}Extend $C_{\mathrm{red}}$ to the red connected component of $G$
containing it, and continue to denote this component by
$C_{\mathrm{red}}$. Its order can only increase, so by~(\ref{red_n/2})
and the standing assumption, we still have
\begin{equation}
|C_{\mathrm{red}}|=\frac{n}{2}-o(n).
\end{equation}
Since $C_{\mathrm{red}}$ is now a maximal red connected component of
$G$, there are no red edges between $C_{\mathrm{red}}$ and
$V(G)\setminus C_{\mathrm{red}}$.
\\

\noindent {\textbf{Step 2}. There is one large blue component in \(G[C_{\text{red}},\,V\setminus C_{\text{red}}]\).}

Since there are no 
red edges between \(C_{\text{red}}\) and \(V\setminus C_{\text{red}}\), all edges of
\(G[C_{\text{red}},V\setminus C_{\text{red}}]\) are colored blue or green. Applying Corollary~\ref{cor_on_large_component} (a)
to
this bipartite graph gives a monochromatic (say blue) component \(C_{\text{blue}}^2\) such that
\begin{equation}
|C_{\text{blue}}^2\cap C_{\text{red}}|\ge \frac{|C_{\text{red}}|}{2}-5y_0,\qquad |C_{\text{blue}}^2\cap (V\setminus C_{\text{red}})|\ge \frac{|V\setminus C_{\text{red}}|}{2}-5y_0.
\end{equation}
Extend $C_{\mathrm{blue}}^2$ to the blue connected component of $G$
containing it, and continue to denote this component by
$C_{\mathrm{blue}}^2$. By the lower bounds above and the standing
assumption, we still have
\begin{equation}
|C_{\mathrm{blue}}^2|=\frac{n}{2}-o(n).
\end{equation}
Now define
\begin{equation}
X_1:=C_{\text{blue}}^2\cap C_{\text{red}},\qquad Y_1:=C_{\text{blue}}^2\cap (V\setminus C_{\text{red}}).
\end{equation}
It follows that \(|X_1|,|Y_1|=\tfrac{n}{4}+o(n)\). Put \(Y_2:=C_{\text{red}}\setminus X_1\) and \(X_2:=(V\setminus C_{\text{red}})\setminus Y_1\);
therefore \(|X_2|,|Y_2|=\tfrac{n}{4}-  o(n)\).\\

\noindent\textbf{Step 3}. There are two large green components inside \(G[C_{\text{red}}, V\setminus C_{\text{red}}]\).

Since \(C_{\text{blue}}^2\) is a maximal blue component across the cut,
there are no blue edges between \(Y_1\) and \(Y_2\), and none between \(X_1\) and \(X_2\).
Hence the bipartite graphs \(G[Y_1,Y_2]\) and \(G[X_1,X_2]\) contain only green edges.
Applying Corollary~\ref{cor_on_large_component} (a)
to \(G[Y_1,Y_2]\),
we obtain a large green component \(A\) satisfying
\begin{equation}
A \subseteq Y_1 \cup Y_2, \qquad |A| = \tfrac{n}{2} - o(n).
\end{equation}
In particular, \(A\) intersects both \(Y_1\) and \(Y_2\) in all but $o(n)$ vertices. 
To simplify notation, from now on we update the definitions:
\begin{equation}
Y_1 := A \cap Y_1, \qquad Y_2 := A \cap Y_2,
\end{equation}
so that \(A = Y_1 \cup Y_2\) denotes precisely this green component,
and the updated sets \(Y_1\) and \(Y_2\) refer to its two sides in the bipartition. Clearly, it still holds that $|Y_1|,|Y_2|=n/4-o(n)$.
An entirely analogous application of Corollary~\ref{cor_on_large_component} (a)
produces another large green component \(B \subseteq X_1 \cup X_2\);
we redefine \(X_i := B \cap X_i\) for \(i=1,2\), so that \(B = X_1 \cup X_2\).
Moreover, 
$|X_1|, |X_2|=n/4-o(n)$.
Since $|A|+|B|=n-o(n)$, the sets $A$ and $B$ must lie in distinct green connected components, and hence there are no green edges between them.

\medskip
\noindent\textbf{Step 4.} There is a red--blue splitting across \(G[A,B]\).

By construction there are no green edges between \(A\) and \(B\), so the bipartite graph \(G[A,B]\) is colored only red and blue. 
The idea is to repeat Step~2 and Step~3 inside \(G[A,B]\), obtaining the corresponding blue--red splittings.

Apply Corollary~\ref{cor_on_large_component}(a) to find a blue component \(C_{\text{blue}}^3\), intersecting both \(A\) and \(B\) in \(\frac n4-o(n)\) vertices. Extend \(C_{\text{blue}}^3\) to the blue connected component of \(G\) containing it, and continue to denote this component by \(C_{\text{blue}}^3\). Set
\begin{equation}
A_{1}=A\cap C_{\text{blue}}^3, \qquad B_{1}=B\cap C_{\text{blue}}^3.
\end{equation}
Define
\begin{equation}
A_{2}=A\setminus A_{1}, \qquad B_{2}=B\setminus B_{1}.
\end{equation}

Since \(C_{\text{blue}}^3\) is a blue connected component of \(G\), there are no blue edges between \(A_{1}\) and \(B_{2}\), and none between \(A_{2}\) and \(B_{1}\). Since there are also no green edges between \(A\) and \(B\), the bipartite graphs \(G[A_{1},B_{2}]\) and \(G[A_{2},B_{1}]\) contain only red edges.

Next apply Corollary~\ref{cor_on_large_component}(a) to the bipartite graphs \(G[A_{1},B_{2}]\) and \(G[A_{2},B_{1}]\). This yields two large red connected components, each of order \(\tfrac n2-o(n)\). Extend the two red components obtained above to red connected components of \(G\). They are distinct, since otherwise one red connected component would contain \(n-o(n)\) vertices.

At each application of Corollary~\ref{cor_on_large_component}(a), we discard a leftover set of size \(o(n)\). After updating the definitions of \(A_{1},A_{2},B_{1},B_{2}\), we obtain that
\begin{equation}
|A_{1}|, |A_{2}|, |B_{1}|, |B_{2}|=\frac n4-o(n).
\end{equation}
Since the two red connected components are distinct, there are no red edges between the updated sets \(A_{2}\) and \(B_{2}\).

Finally consider the bipartite graph \(G[A_{2},B_{2}]\). There are no green edges between \(A_{2}\) and \(B_{2}\), since \(A_{2}\subseteq A\) and \(B_{2}\subseteq B\), and there are no red edges between them by the preceding paragraph. Hence all edges currently present in this graph are blue. A further application of Corollary~\ref{cor_on_large_component}(a) yields a large blue connected component, also of order \(\frac n2-o(n)\). We update the definitions of \(A_{2}\) and \(B_{2}\) once more by discarding another \(o(n)\) set, and extend this blue component to the blue connected component of \(G\) containing it.

This blue connected component is distinct from \(C_{\text{blue}}^3\), since otherwise one blue connected component would contain \(A_{1}\cup B_{1}\cup A_{2}\cup B_{2}\), and hence would have order \(n-o(n)\).

Finally, redefine
\begin{equation}
A=A_{1}\cup A_{2}, \qquad B=B_{1}\cup B_{2}.
\end{equation}
After updating the definitions, \(A\) is contained inside the green connected component obtained in Step~3, while \(B\) is contained inside the other green connected component. Moreover, \(A_{1}\cup B_{1}\) and \(A_{2}\cup B_{2}\) lie inside distinct blue connected components, while \(A_{1}\cup B_{2}\) and \(A_{2}\cup B_{1}\) lie inside distinct red connected components. Thus all statements follow, completing the proof.
\end{proof}

The next standard 
lemma is used for us to determine the constant $C$.

\begin{lemma}\label{lem:expansion-3quarters}
Let \(G\sim G(n,p)\) with \(np=\omega(1)\) and $p=o(1)$.
Fix any constant \(K>\log 4\).
Then, with high probability, every subset \(X\subseteq V(G)\) with \(|X|=\frac{K}{p}\) satisfies
\begin{equation}
|N(X)|\ \ge\ \Big(\frac{3}{4}+\varepsilon\Big)n,
\end{equation}
where \(\varepsilon>0\) is a small constant depending only on \(K\).
\end{lemma}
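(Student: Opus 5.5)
Fix $X$ with $|X|=K/p$ and bound the number of non-neighbours of $X$, mirroring the first-moment-plus-union-bound computation in the proof of Theorem~\ref{thm:three-color}\,(ii). For each $v\in V(G)\setminus X$ the events $\{v\notin N(X)\}$ are independent, with probability
\begin{equation}
(1-p)^{|X|}=\exp\bigl(-p|X|+O(p^2|X|)\bigr)=e^{-K}+o(1),
\end{equation}
since $p|X|=K$ and $p^2|X|=Kp=o(1)$. So $Z:=|V(G)\setminus(X\cup N(X))|\sim\mathrm{Bin}(n-|X|,(1-p)^{|X|})$ with $\mathbb E Z=(e^{-K}+o(1))n$. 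Because $K>\log 4$, we have $e^{-K}<1/4$, so fix $\varepsilon>0$ with $e^{-K}<\tfrac14-3\varepsilon$. Also $|X|=K/p=Kn/(np)=o(n)$. Hence $|N(X)|=n-|X|-Z\ge(\tfrac34+\varepsilon)n$ holds as soon as $Z\le(\tfrac14-2\varepsilon)n$, a deviation of order $\varepsilon n$ above the mean.

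The Chernoff upper-tail bound, with deviation linear in $n$ and mean $\Theta(n)$, gives
\begin{equation}
\mathbb P\bigl(Z>(\tfrac14-2\varepsilon)n\bigr)\le\exp(-c_\varepsilon n)
\end{equation}
for some constant $c_\varepsilon>0$ depending only on $K$.

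Finally, take a union bound over all $X$. As in the proof of Theorem~\ref{thm:three-color}\,(ii),
\begin{equation}
\binom{n}{K/p}\le\exp\Bigl(\frac Kp\log\frac{enp}{K}\Bigr)=\exp\Bigl(Kn\,\frac{\log(enp/K)}{np}\Bigr)=\exp(o(n)),
\end{equation}
because $np=\omega(1)$. Hence the failure probability is at most $\exp(o(n)-c_\varepsilon n)=o(1)$.

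The only delicate point is checking that the entropy term really is $o(n)$ uniformly, which is exactly where $np\to\infty$ is used, and that $p=o(1)$ makes $(1-p)^{K/p}\to e^{-K}$. Neither should be a real obstacle.
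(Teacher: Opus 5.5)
Your proposal is correct and is essentially the paper's argument: the paper treats $|N(X)|\sim\mathrm{Bin}(n-K/p,q)$ with $q\ge 1-e^{-K}\ge \tfrac34+2\varepsilon$ and applies the Chernoff lower tail, while you apply the upper tail to the complementary count $Z$; both then finish with the same $\exp(o(n))$ union bound. One small simplification: the exact inequality $(1-p)^{K/p}\le e^{-K}$ already suffices at the probability step, so $p=o(1)$ is not needed there.
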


\begin{proof}
Fix \(K>\log 4\) and choose a small constant \(\varepsilon>0\) such that  
\begin{equation}\label{eq:eps-choice}
1-e^{-K}\ \ge\ \frac{3}{4}+2\varepsilon .
\end{equation}
Fix any subset \(X\subseteq V(G)\) with \(|X|=\frac{K}{p}\).
For each vertex \(v\in V(G)\setminus X\), let
\begin{equation}
q:= \mathbb P(v\in N(X))=1-(1-p)^{\frac{K}{p}}\ \ge\ 1-e^{-K}\ \ge\ \frac{3}{4}+2\varepsilon.
\end{equation}
Hence \(|N(X)|\) 
has the binomial distribution  \(\mathrm{Bin}(n-\frac{K}{p},q)\),
with \(q\ge\frac{3}{4}+2\varepsilon\),
so that
\begin{equation}\label{eq:mu}
\mathbb{E}|N(X)|
=
q\left(n-\frac{K}{p}\right)
\geq
\left(\frac{3}{4}+\frac{3}{2}\varepsilon\right)n
\end{equation}
for all large \(n\), since \(K/p=o(n)\).

By Chernoff’s inequality, there exists \(c=c(K,\varepsilon)>0\) such that
\begin{equation}\label{eq:chernoff}
\mathbb P\!\left(|N(X)|<\Big(\frac{3}{4}+\varepsilon\Big)n\right)\le e^{-c n}.
\end{equation}
As
\begin{equation}
\binom{n}{\frac{K}{p}}\le \exp\!\Big(\frac{K}{p}\log\frac{enp}{K}\Big)
=\exp(o(n)),
\end{equation}
a union bound gives
\begin{equation}
\mathbb P\!\left(\exists\,X:\ |X|=\frac{K}{p},\ |N(X)|<\Big(\frac{3}{4}+\varepsilon\Big)n\right)
\le e^{o(n)}\cdot e^{-c n}=o(1).
\end{equation}
Thus, with high probability, every subset \(X\) of size \(\frac{K}{p}\) satisfies
\(|N(X)|\ge(\frac{3}{4}+\varepsilon)n\), as claimed.
\end{proof}

\begin{proof}[End of proof of (i) of Theorem~\ref{thm:three-color}]
Let the constant \(K>\log 4\) be given by Lemma~\ref{lem:expansion-3quarters}, 
and set \(C := 2K\).
Applying Lemma~\ref{lem:global-splitting}, we obtain the global three–color
splitting structure: there exist disjoint vertex subsets
\begin{equation}
A = A_{1}\cup A_{2},\qquad B = B_{1}\cup B_{2},
\end{equation}
with \(|A_{1}|,|A_{2}|,|B_{1}|,|B_{2}| = \frac{n}{4}-o(n)\), such that
\(A\) lies entirely inside one green connected component, while \(B\) lies
entirely inside a \emph{different} green connected component;
\(A_{1}\cup B_{1}\) lies inside one blue connected component, and
\(A_{2}\cup B_{2}\) lies inside a \emph{different} blue connected component;
\(A_{1}\cup B_{2}\) lies inside one red connected component, and
\(A_{2}\cup B_{1}\) lies inside a \emph{different} red connected component.

Let \(R = V(G)\setminus(A\cup B)\), so that \(|R| = o(n)\).
If 
\begin{equation}
|R| < \frac{4K}{p},
\end{equation}
then \(|A| + |B| \ge n - \frac{4K}{p}\),
and hence one of \(A\) or \(B\) has order 
at least
\(n/2 - \frac{2K}{p}\), implying that at least one green connected component has order no less than $\frac n2 -\frac{2K}{p}$,
contradicting the \textbf{standing assumption}. 
Thus we may assume 
\begin{equation}
|R| \ge \frac{4K}{p}.
\end{equation}

By Lemma~\ref{lem:expansion-3quarters}, the number of vertices in \(R\) whose neighborhood fails to intersect \(A_1\) is at most \(\frac{K}{p}\). Indeed, suppose that there were at least \(\frac{K}{p}\) such vertices, and choose a subset \(X\) of exactly \(\frac{K}{p}\) of them. Then
\begin{equation}
N(X)\cap A_1=\varnothing,
\end{equation}
and therefore
\begin{equation}
|N(X)|\leq n-|A_1|
=\frac{3n}{4}+o(n)
<
\left(\frac{3}{4}+\varepsilon\right)n
\end{equation}
for all sufficiently large \(n\), contradicting Lemma~\ref{lem:expansion-3quarters}. The same argument applies to each of \(A_2,B_1,B_2\).

Hence the total number of vertices in \(R\) whose neighborhood misses at least one of the four parts \(A_1,A_2,B_1,B_2\) is at most \(\frac{4K}{p}\). Let \(R'\) denote the set of vertices having a neighbor in each of the four parts. Then
\begin{equation}
|R'|\geq |R|-\frac{4K}{p}.
\end{equation}

For each \(x \in R'\), the colors of its incident edges must satisfy
the following global splitting constraints:

\begin{itemize}
\item[\textbf{(G)}] \textbf{Green rule.}  
If \(x\) is incident to a green edge to \(A_1\) or \(A_2\),
then all edges from \(x\) to \(B_1\) and \(B_2\) are non-green;
otherwise, the vertex subsets \(A\) and \(B\) lie in a single green
connected component, a contradiction. Similarly, if \(x\) is incident
to a green edge to \(B_1\) or \(B_2\), then all edges from \(x\) to
\(A_1\) and \(A_2\) are non-green.
   \item[\textbf{(B)}] \textbf{Blue rule.}  
If \(x\) is incident to a blue edge to \(A_1\) or \(B_1\),
then all edges from \(x\) to \(A_2\) and \(B_2\) are non-blue;
otherwise, the vertex subsets \(A_1,B_1,A_2,B_2\) would all belong
to a single blue connected component, a contradiction.
Similarly, if \(x\) is incident to a blue edge to \(A_2\) or \(B_2\),
then all edges from \(x\) to \(A_1\) and \(B_1\) are non-blue.

\item[\textbf{(R)}] \textbf{Red rule.}  
If \(x\) has a red edge to \(A_1\) or \(B_2\),
then all edges from \(x\) to \(A_2\) and \(B_1\) are non-red;
otherwise, the vertex subsets \(A_1,B_2,A_2,B_1\) would all belong
to a single red connected component, a contradiction.
Similarly, if \(x\) has a red edge to \(A_2\) or \(B_1\),
then all edges from \(x\) to \(A_1\) and \(B_2\) are non-red.
\end{itemize}

Because every vertex \(x\in R'\) has neighbors in each of the four regions,
it is impossible for \(x\) to realize all these adjacencies using only red
and blue edges: by the blue and red rules, each of these colors can meet
vertices in at most one of its two opposite pairs, and the union of one
blue pair and one red pair contains at most three of the four regions.
Thus \(x\) must have a green edge to at least one of the four regions.
By rule~\textbf{(G)}, all green edges incident to \(x\) go towards exactly
one side, either \(A=A_1\cup A_2\) or \(B=B_1\cup B_2\), but not both.

Define
\begin{align}
R_A :&= \{x \in R' : \text{$x$ has a green neighbor in $A$}\},\\
R_B :&= \{x \in R' : \text{$x$ has a green neighbor in $B$}\}.
\end{align}
Then it follows that \(R_A \cup R_B = R'\) and \(R_A \cap R_B = \emptyset\). Let $A'=A\cup R_A$ and $B'=B\cup R_B$. 
Note that 
the sum of the sizes is 
\begin{equation}
    |A'|+|B'|\geq n-|R|+|R|-\frac{4K}{p}=n-\frac{4K}{p}.
\end{equation} By pigeonhole principle, one of the sets $A'$ and $B'$ has size at least $\frac{n}{2}-\frac{2K}{p}=\frac{n}{2}-\frac{C}{p}$, where we have taken $C=2K$. Note that the resulting set belongs to a single green connected component, and 
this contradicts the \textbf{standing assumption} that all monochromatic components
have order strictly smaller than 
\(\frac n2 - \frac{C}{p}\).

Hence, together with the argument for \(|R| < \frac{4K}{p}\),
the proof of part (i) of Theorem~\ref{thm:three-color} is completed.\end{proof}

\bibliographystyle{plain}
\addcontentsline{toc}{chapter}{Bibliography}
\bibliography{Gnpcombined}
\end{document}